\newcommand{\benw}[2][]{\ifdraft{\todo[linecolor=Green,backgroundcolor=Green!25,bordercolor=Green,#1]{#2---Ben W.}}{}}
\newcommand{\kgnb}[2][]{\ifdraft{\todo[linecolor=Red,backgroundcolor=Red!25,bordercolor=Red,#1]{#2---Keegan B.}}{}}
\newtheorem{theorem}{Theorem}[section]
\newtheorem{lemma}[theorem]{Lemma}
\newtheorem{conjecture}[theorem]{Conjecture}
\newtheorem{definition}[theorem]{Definition}
\newtheorem{proposition}[theorem]{Proposition}
\newtheorem{corollary}[theorem]{Corollary}
\theoremstyle{definition}
\newtheorem{example}[theorem]{Example}
\newtheorem{remark}[theorem]{Remark}
\newcommand{\N}{\mathbf{N}}
\newcommand{\Q}{\mathbf{Q}}
\newcommand{\Z}{\mathbf{Z}}
\newcommand{\C}{\mathbf{C}}
\newcommand{\F}{\mathbf{F}}
\newcommand{\primeIdealWasl}{\mathfrak{p}}
\DeclareMathOperator{\Gal}{Gal}
\begin{document}
	
	\title{Obstructions to free periodicity and symmetric L-space knots} 
	
	\author{Keegan Boyle}  
	\address{Department of Mathematical Sciences, New Mexico State University, USA} 
	\email{kboyle@nmsu.edu}
	
	\author{Nicholas Rouse}  
	\address{Department of Mathematics, University of British Columbia, Canada} 
	\email{rouse@math.ubc.ca}

        \author{Ben Williams}  
	\address{Department of Mathematics, University of British Columbia, Canada} 
	\email{tbjw@math.ubc.ca}
	
	\setcounter{section}{0}

	\begin{abstract}
		We investigate a polynomial factorization problem that naturally arises from Hartley's factorization condition on the Alexander polynomial of freely periodic knots. We give a number-theoretic interpretation of this factorization condition, which allows for efficient computation. As an application, we prove that any polynomial which is not a product of cyclotomic polynomials can be the Alexander polynomial of a freely $p$-periodic knot for only finitely many $p$. As a demonstration of the computational efficiency of these methods, we also show that the Alexander polynomial of any freely-periodic L-space knot with genus at most 16 must be a product of cyclotomic polynomials. We conjecture that any periodic or freely periodic L-space knot must be an iterated torus knot.
	\end{abstract}
	\maketitle
	\tableofcontents
	
	\section{Introduction}
	The Alexander polynomial of a knot is good at obstructing some types of symmetry. Perhaps most famously, Murasugi proved that the Alexander polynomial of a periodic knot, that is a knot invariant under a finite order rotation around some axis, satisfies a factorization condition \cite{MR292060}. Murasugi's theorem has received significant attention in the literature; see \cite{MR639581} for the version we state here.
	\begin{theorem} \label{thm:Murasugi}
		Let $K$ be an $n$-periodic knot with quotient knot $\overline{K}$, and let $A$ be the axis of symmetry. Then
		\[
		\Delta_K(t) = \Delta_{\overline{K}}(t) \prod_{i = 1}^{n-1}\Delta_{A \cup \overline{K}}(\zeta_n^i,t),
		\]
		where $\zeta_n$ is a primitive $n^{th}$ root of unity.
	\end{theorem}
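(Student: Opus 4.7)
The plan is to prove this via a tower of covering spaces, with the key geometric input that the branched $n$-fold cover $S^3 \to S^3/(\Z/n) = S^3$ becomes an unbranched cyclic cover once the axis $A$ is removed. Concretely, $S^3 \setminus N(K \cup A) \to S^3 \setminus N(\overline{K} \cup A)$ is a $\Z/n$-cover classified by sending the meridian of $A$ to $\zeta_n$ and fixing the meridian of $\overline{K}$. Let $M$ denote the Alexander module of the link $A \cup \overline{K}$, a module over $\Lambda := \Z[s^{\pm 1}, t^{\pm 1}]$ of order $\Delta_{A \cup \overline{K}}(s,t)$, where $s$ and $t$ are meridians of $A$ and $\overline{K}$ respectively.

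In the tower of abelian covers, passing from $S^3 \setminus N(\overline{K} \cup A)$ to its intermediate $(\Z/n) \oplus \Z$-cover---which is the universal abelian cover of $S^3 \setminus N(K \cup A)$---corresponds algebraically to reducing $M$ modulo $s^n - 1$. After extending scalars to $\Z[\zeta_n]$, the Chinese remainder decomposition $\Lambda/(s^n - 1) \cong \prod_{i=0}^{n-1} \Z[\zeta_n][t^{\pm 1}]$ shows that the $\Z[t^{\pm 1}]$-order of $M/(s^n - 1)M$ is $\prod_{i=0}^{n-1} \Delta_{A \cup \overline{K}}(\zeta_n^i, t)$, up to units.

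The final step passes from the Alexander polynomial of the link $K \cup A$ upstairs to that of $K$ alone, i.e., glues the unknotted axis back in. This is Dehn filling along a meridian of $A$, and on Alexander modules it becomes a Mayer--Vietoris / long exact sequence computation that substitutes $s = 1$ into the $i = 0$ factor above, so $\Delta_{A \cup \overline{K}}(1, t)$ enters the formula. By the Torres formula, $\Delta_{A \cup \overline{K}}(1, t) = \Delta_{\overline{K}}(t) \cdot (t^\ell - 1)/(t - 1)$ where $\ell = \mathrm{lk}(A, \overline{K})$, and the linking-number factor $(t^\ell - 1)/(t - 1)$ is exactly cancelled by the contribution from the filling, leaving the $\Delta_{\overline{K}}(t)$ factor as stated. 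I expect the main obstacle to be this last bookkeeping step: ensuring the Torres correction and the Dehn-filling correction cancel exactly, with consistent normalizations of the Alexander polynomials (which are only well-defined up to units $\pm t^k$). The intermediate algebraic factorization is routine once the Chinese remainder setup is in place, and the covering-space setup is essentially formal, so the whole difficulty concentrates on the final filling argument.
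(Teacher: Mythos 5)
The paper does not prove this statement. Theorem \ref{thm:Murasugi} is Murasugi's periodicity theorem, quoted as background with a citation to Murasugi's original article and to \cite{MR639581} for the version stated; there is no in-paper proof to compare your attempt against.

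On its own merits, your sketch is the standard covering-space argument for this theorem and is correct in outline: the exterior of $K \cup \tilde A$ (with $\tilde A$ the axis upstairs) is the $n$-fold cyclic cover of the exterior of $\overline K \cup A$ dual to the meridian of $A$; the Alexander module of the quotient link determines that of the cover by a norm/resultant computation producing $\prod_i \Delta_{A \cup \overline K}(\zeta_n^i,t)$; and the Torres condition $\Delta_{A\cup\overline K}(1,t) \doteq \frac{t^\lambda-1}{t-1}\Delta_{\overline K}(t)$ converts the $i=0$ factor, with the matching factor $\frac{t^\lambda-1}{t-1}$ appearing upstairs (since $\mathrm{lk}(\tilde A,K)=\mathrm{lk}(A,\overline K)$) and cancelling. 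The difficulties are where you predict, plus two you did not flag: (i) the decomposition $\Lambda/(s^n-1)\cong \prod_{i}\Z[\zeta_n][t^{\pm 1}]$ fails integrally at primes dividing $n$, so one must either work over $\Q$ and control content separately or phrase the factorization as a resultant identity $\mathrm{Res}_s(s^n-u,\Delta_{A\cup\overline K}(s,t))$; and (ii) for a two-component link the first elementary ideal need not be principal, and passing from the module order to $H_1$ of the intermediate finite cover requires Wang/Milnor exact-sequence bookkeeping. These normalization issues are exactly why the literature contains several corrected versions of this theorem and why the authors cite \cite{MR639581} for the precise statement; a complete write-up would need to address them, but the architecture of your argument is the right one.
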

  A standard use of this theorem is to show that a particular knot is not $n$-periodic for any $n$. 
  
In addition to rotation around an axis, a finite cyclic group $\Z/n\Z$ can also act on $S^3$ freely, and in this case invariant knots are called \emph{freely $n$-periodic}. In \cite{MR608857}, Hartley provides a factorization condition on the Alexander polynomials of freely periodic knots, analogous to the Murasugi condition. We reproduce Hartley's condition here.
	\begin{theorem} \label{thm:Hartley}
		Let $K$ be a freely $n$-periodic knot with Alexander polynomial $\Delta_K(t)$. Then
		\[
		\Delta_K(t^n) = \prod_{i=0}^{n-1}g(\zeta_n^i t),
		\]
		where $\zeta_n$ is a primitive $n^{th}$ root of unity, and $g(t)$ is an Alexander polynomial of a knot.
	\end{theorem}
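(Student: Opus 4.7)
The plan is to pass to the quotient and relate infinite cyclic covers. Let $X=S^3\setminus\nu(K)$ and $Y=X/(\Z/n)$, so $Y$ is the exterior of the quotient knot $\overline K$ in the lens space $L=S^3/(\Z/n)$. Let $\tilde X\to X$ and $\tilde Y\to Y$ be the infinite cyclic covers associated to the abelianization maps $\pi_1(\cdot)\to\Z$ sending meridians to $1$. These are compatible under $\pi_1(X)\hookrightarrow\pi_1(Y)$ (meridians map to meridians), and since the $\Z/n$-action is trivial on $H_1(X)\cong\Z$, it lifts to $\tilde X$ commuting with the deck transformation. Consequently $\tilde X\to\tilde Y$ is a regular $n$-fold cyclic cover, and $\tilde X$ is the universal abelian cover of $Y$ with deck group $H_1(Y)\cong\Z\oplus\Z/n$, generated by $t$ (meridian lift) and a lift $s$ of the $\Z/n$-generator.

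The Alexander module $A:=H_1(\tilde X;\Q)$ is thus a module over $R:=\Q[t^{\pm 1}][s]/(s^n-1)$. Extending scalars to $\C$ and using $R\otimes\C\cong\prod_{j=0}^{n-1}\C[t^{\pm 1}]$ via $s\mapsto\zeta_n^j$ yields a decomposition $A\otimes\C=\bigoplus_j E_j$, hence a factorization
\[
\Delta_K(t) \;=\; \prod_{j=0}^{n-1} g_j(t)
\]
up to units, where $g_j(t)$ is the order of the $\C[t^{\pm 1}]$-module $E_j$. The target identity $\Delta_K(t^n)=\prod_j f(\zeta_n^j t)$ is equivalent, via the elementary identity $\prod_j(\zeta_n^j t-\beta)=\pm(t^n-\beta^n)$, to exhibiting an integer polynomial $f$ whose multiset of roots is a consistent choice of one $n$-th root for each root of $\Delta_K$. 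So the task reduces to producing such an $f$ and verifying it is the Alexander polynomial of a genuine knot in $S^3$.

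For the construction of $f$, I would work geometrically: take a Seifert surface $\Sigma$ for $K$ placed generically with respect to the free $\Z/n$-action (a fully equivariant surface need not exist, since $\chi(\Sigma)=1-2g$ need not be divisible by $n$), cut $\Sigma$ along its intersections with a fundamental domain of the action, and reassemble the pieces via the group action to obtain a Seifert surface for a new knot $L\subset S^3$ whose Seifert form realizes $f=\Delta_L$ with the required root behaviour. The main obstacle is precisely this construction—ensuring the reassembled surface is connected, bounds a single knot, and has a Seifert form with the integrality and root properties forced by Hartley's formula—since the $E_j$-decomposition exists a priori only over $\C$, so both the integrality and the interpretation of $f$ as a classical Alexander polynomial require essential geometric input beyond the algebraic eigenspace analysis.
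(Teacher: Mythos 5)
The paper does not actually prove this statement: it is quoted as background from Hartley's paper \cite{MR608857}, so there is no internal proof to compare against. Judged on its own terms, your proposal has a genuine gap at its core: the homological setup of the quotient is wrong, and it is wrong at exactly the point where freely periodic knots differ from periodic ones. Since $K$ is connected, $\overline{K}$ must generate $H_1(L(n,q)) \cong \Z/n$, and one computes that $H_1(Y)$ has rank one with the meridian of $\overline{K}$ mapping to $n$ times a generator --- not $H_1(Y) \cong \Z \oplus \Z/n$ (in particular there is no homomorphism $\pi_1(Y) \to \Z$ sending the meridian to $1$). Correspondingly, a lift $s$ of the generator of the $\Z/n$-action to $\tilde X$ satisfies $s^n = t$, not $s^n = 1$: the infinite cyclic cover $\tilde X$ of $X$ coincides with the infinite cyclic cover of $Y$, with deck group $\Z = \langle s \rangle$ containing $\langle t \rangle = \langle s^n \rangle$ as the index-$n$ subgroup. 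Your ring $R = \Q[t^{\pm 1}][s]/(s^n - 1)$ and the ensuing eigenspace decomposition are the right tools for Murasugi's theorem on periodic knots; here they produce a factorization $\Delta_K(t) = \prod_j g_j(t)$ of $\Delta_K(t)$ itself, which is not Hartley's identity and cannot be massaged into one, since Hartley's formula concerns $\Delta_K(t^n)$. The correct route is to take $f(s)$ to be the order of $H_1(\tilde X;\Q)$ as a $\Q[s^{\pm 1}]$-module and observe that its order over the subring $\Q[s^{\pm n}] = \Q[t^{\pm 1}]$ is the norm $\prod_{i=0}^{n-1} f(\zeta_n^i s)$, which yields $\Delta_K(s^n) \doteq \prod_i f(\zeta_n^i s)$ directly.

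Second, even granting the corrected setup, the substantive half of Hartley's theorem --- that $f$ may be taken to be the Alexander polynomial of a genuine knot in $S^3$, i.e., $f(1) = \pm 1$ and $f(t) \doteq f(t^{-1})$ after suitable normalization --- is not established in your writeup. You defer it to an equivariant cut-and-reassemble of a Seifert surface and explicitly flag that construction as the main obstacle; as written this is a plan, not an argument. Hartley obtains these conditions from the module structure itself (a Torres-type computation for the one-variable order $f(s)$ relative to the meridian class $ns$), and that is where the real content of the theorem lies.
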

	Unlike Murasugi's condition, where all values of $n$ may be checked in one computation, Hartley's condition appears to require checking for infinitely many integers $n$. The polynomial to be factored depends on $n$. For certain knots, hyperbolic geometry imposes bounds on the order of any free periodicity, but if we work with only the Alexander polynomial, we do not have recourse to such a luxury as hyperbolic geometry. This paper is written to rule out, by finite computation, free periodicities on various knots using only the information of their Alexander polynomials.

        \par
	We study polynomials satisfying a slightly generalized version of Hartley's condition, by allowing a sign ambiguity.
        \begin{definition} \label{def:nHartley}
          Let $\Delta(t) \in \Z[t]$ be a polynomial and $n$ a positive integer. We say $\Delta(t)$ is \textbf{$n$-Hartley} or \textbf{has the $n$-Hartley condition} if there exists a polynomial $g(t) \in \Z[t]$ for which
          \[ \Delta(t) = \pm \prod_{i=0}^{n-1}g(\zeta_n^i t), \]
          where $\zeta_n$ is a primitive $n$th\benw{changed ``$n$-th'' etc.~to ``$n$th'' throughout to match Wikipedia style (for instance) and to prevent line breaks between the ``$n$'' and the ``th'', which looks stupid.} root of unity.
        \end{definition}
        Note that when $\Delta(t^n)$ is irreducible, $\Delta(t)$ is not $n$-Hartley. Previous work, for example \cite{MR2852954} and \cite[\S 4.1]{SchinzelPolynomials}, has studied whether $\Delta(t^n)$ is irreducible. We do not know of any results that apply to reducible, monic polynomials---a class of particular interest for our applications to symmetric L-space knots as in Conjecture \ref{conj:iteratedtorus} below. \par

        \benw[inline]{Added this remark to cover a bunch of stuff.}
        \begin{remark} \label{rem:backgroundOnSymmetryTypes}
          Periodic and freely-periodic knots are the two extreme examples of a spectrum of symmetries of knots. We refer to \cite[Sections 12, 13]{boyle2023classification} for more detail, where the entire spectrum is contained within the types $\mathrm{Per-}(a)$ and $\mathrm{FPer-}(a,b)$, the latter consisting of freely $n$-periodic knots only when $a$ and $b$ are relatively prime to $n$.\benw{be prepared to change}

          The symmetries in question consist of an action on $S^3$ by an orientation-preserving diffeomorphism $\phi : S^3 \to S^3$ of finite order that satisfies $\phi(K) = K$ and preserves the orientation of the knot---this last condition is automatically satisfied if the order is not $2$. If $\phi$ fixes a point of $S^3$, then it is conjugate to a finite-order rotation, and it is a periodic symmetry of the knot. At the other extreme, if no power $\phi^k$ different from the identity fixes any point of $S^3$, then the symmetry is a free periodicity.
          
          In between, there are the cases where $\phi$ does not fix any point of $S^3$, but some non-identity power $\phi^k$ acts as a periodicity. These may be called \emph{semi-periodic} symmetries, as in \cite{Paoluzzi2018}. Necessarily, a semi-periodic knot is also periodic, albeit of some smaller order of symmetry. Therefore, if one rules out periodic symmetries of all orders for some knot, one rules out all semi-periodic symmetries at the same time. This is of particular relevance to Conjecture \ref{conj:iteratedtorus} below, which also would imply that no L-space knot is semi-periodic, except if it is an iterated torus knot\benw{I thought L-space knots are also expected not to be periodic at all.}.

          We do not know of Murasugi- or Hartley-type conditions on the Alexander polynomial of a semi-periodic knot in general, but in the particular case of a semi-periodicity that is assembled from two periodicities of disjoint axes of rotation (a \emph{biperiodicity}) such a condition is given in \cite[Ch.~3]{Guilbault2021}.
        \end{remark}

\subsection{Results}
Our first achievement is to compute exactly the infinite set of integers $n$ for which the powers of cyclotomic polynomials are $n$-Hartley.
\begin{proposition} \label{prop:cycloHartley}
	Let $f(t) = \Phi_m(t)^k$ be the $k$th power of the $m$th cyclotomic polynomial and let $n \in \N$. Let $b$ be the largest divisor of $n$ for which $\gcd(b,m) = 1$. Then $f(t)$ is $n$-Hartley if and only if $n/b$ divides $k$.
\end{proposition}

In particular, there are infinitely many integers $n$ for which any given power of a cyclotomic polynomial is $n$-Hartley. In contrast we will prove that there are only finitely many such integers for polynomials that are not products of cyclotomic polynomials.

\begin{theorem} \label{thm:main}
	Let $\Delta(t)$ be a polynomial that has a factor that is neither a cyclotomic polynomial nor a monomial. The set of natural numbers $n$ for which $\Delta(t)$ is $n$-Hartley is finite.
      \end{theorem}
This theorem will follow immediately from the results in Section \ref{sec:nt}, and in fact we give an algorithm to compute this finite set in Section \ref{sec:algorithm}. In Section \ref{sec:computations} we apply this algorithm to give computational evidence for some conjectures. 

\begin{corollary}
	Let $\Delta(t)$ be a polynomial that has a factor that is neither a cyclotomic polynomial nor a monomial. There are only finitely many natural numbers $n$ for which there is a freely $n$-periodic knot with Alexander polynomial equal to $\Delta(t)$.
\end{corollary}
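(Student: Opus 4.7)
The plan is to deduce the corollary almost verbatim from Theorem \ref{thm:main} together with Hartley's theorem \ref{thm:Hartley}. Specifically, suppose $K$ is a freely $n$-periodic knot with Alexander polynomial $\Delta_K(t) = \Delta(t)$. Then Theorem \ref{thm:Hartley} supplies an Alexander polynomial $f$ of a knot for which
\[
\Delta(t^n) = \prod_{i=0}^{n-1} f(\zeta_n^i t).
\]
This is precisely the assertion that $\Delta(t)$ satisfies Hartley's factorization condition \ref{thm:Hartley} for the integer $n$. Consequently the set of $n$ for which such a freely $n$-periodic knot exists is a subset of the set considered in Theorem \ref{thm:main}.

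By hypothesis $\Delta(t)$ is not a product of cyclotomic polynomials, so Theorem \ref{thm:main} guarantees that this latter set is finite (and effectively computable). Therefore only finitely many $n$ can admit a freely $n$-periodic knot whose Alexander polynomial equals $\Delta(t)$, which is the statement of the corollary.

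There is no real obstacle, since the number-theoretic work has already been done in Theorem \ref{thm:main}; the only thing to be careful about is the logical direction. Hartley's theorem is a necessary condition on $\Delta_K$, so the argument shows that outside the finite set from Theorem \ref{thm:main} no freely $n$-periodic knot with Alexander polynomial $\Delta$ can exist. We are not asserting the converse, i.e.\ that every $n$ in the finite set is actually realized by such a knot.
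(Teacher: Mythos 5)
Your proposal is correct and is exactly the argument the paper intends: the corollary is stated immediately after Theorem \ref{thm:main} with no separate proof, precisely because it follows by combining Hartley's necessary condition (Theorem \ref{thm:Hartley}) with the finiteness of the set of admissible $n$ from Theorem \ref{thm:main}, as you do. Your remark about the logical direction (necessity only, no claim of realizability) is the right point of care and matches the paper's usage.
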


Our technique is to define an integer associated to a polynomial $\Delta(t)$ called $D(\Delta(t))$ (Definition \ref{defn:schinzelDegree}) which is defined in terms of the irreducible factors of $\Delta(t)$ and their multiplicities. In Section \ref{sec:nt}, we study how Hartley's condition behaves under factorization. We pay particular attention to powers of irreducible polynomials. Theorem \ref{thm:main} is a consequence of the following more technical result.

\par
\begin{theorem} \label{thm:oneBadFactor}
Suppose that $\Delta(t)$ is the Alexander polynomial of a knot $K$ and that $f_1(t)$ is an irreducible factor of $\Delta(t)$ appearing with multiplicity $s$. If $D(f_1(t)^s) = k$, then $\Delta(t)$ is not $n$-Hartley for any $n$ not dividing $k$. In particular, $K$ can admit free periodicities of orders dividing $k$ only.\benw{changed wording, ideally not the sense.}
\end{theorem}
\begin{example}
Consider the knot \texttt{K12a23} (the alphanumeric code refers to the list \cite{KnotInfo}), which has Alexander polynomial
\[
\Delta(t) = (t - 2) (2t - 1)  (t^4 - 5t^3 + 9t^2 - 5t + 1).
\]
We can easily calculate $D(t-2) = 1$ (see Definition \ref{defn:schinzelDegree}) so that $t-2$ is not $n$-Hartley for any integer $n \ge 2$ by Remark \ref{rmk:iff}. By Theorem \ref{thm:oneBadFactor}, $\Delta(t)$ is not $n$-Hartley for any $n$, and so \texttt{K12a23} is not freely periodic of any order.
\end{example}
Our results produce a practical algorithm for determining those $n$ for which a given polynomial is $n$-Hartley. In fact, we resolve Conjecture \ref{conj:polynomial} below for all knots of genus less than $13$ unconditionally and less than $19$ assuming the L-space conjecture (see below). Our computation involves calculating $D(\Delta(t))$ for over $130,000$ polynomials.
	
\subsection{Conjectures on symmetric L-space knots}

We provide computational evidence for some conjectures about symmetric L-space knots. L-space knots are those knots that have a surgery that is an L-space, a 3-manifold with a particularly simple form of Heegaard Floer homology \cite[Definition 1.1]{MR2168576}. \par
One motivation for the conjectures we present here is that it is difficult to construct polynomials that simultaneously satisfy the restrictions required of the Alexander polynomial of an L-space knot (see for example \cite{MR2168576}, and \cite{MR3782416}), and also satisfy either the factorization condition from Theorem \ref{thm:Murasugi} or from Theorem \ref{thm:Hartley}.
\begin{conjecture} \label{conj:iteratedtorus}
  Every periodic or freely periodic L-space knot is an iterated torus knot.
\end{conjecture}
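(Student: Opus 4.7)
The plan is to reduce the conjecture in two stages: first use the Alexander polynomial factorization conditions to constrain $\Delta_K(t)$ severely, and then use L-space knot structure theory to recognize $K$ itself as iterated torus. I would split along the symmetry type, treating freely periodic and ordinary periodic L-space knots in parallel.

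For a freely $n$-periodic L-space knot $K$, Theorem \ref{thm:main} already guarantees that if $\Delta_K(t)$ is not a product of cyclotomic polynomials, then $n$ is bounded in terms of $E(\Delta_K)$. The first step is to derive an explicit upper bound for $E(\Delta)$ in terms of $\deg \Delta$ using the multiplicity structure in Theorem \ref{thm:HartleyFactorization}. Combined with the Ozsv\'ath--Szab\'o constraints on L-space Alexander polynomials (monic, with alternating $\pm 1$ coefficients) and the standard relation $\deg \Delta = 2g$, this should show that for $n$ sufficiently large relative to $g$ the polynomial $\Delta_K$ must in fact be a product of cyclotomic polynomials. The residual small cases would need separate handling, most plausibly by extending the computational census already carried out in the paper. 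The periodic case would proceed analogously through Murasugi's Theorem \ref{thm:Murasugi}, with the additional input that the quotient knot $\overline{K}$ ought to inherit L-space-like properties via equivariant Dehn surgery, again forcing $\Delta_K$ to be a product of cyclotomic polynomials when the period is large.

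The main obstacle, and where any proof must leave the number-theoretic methods of this paper, is the geometric recognition step: an L-space knot whose Alexander polynomial is a product of cyclotomic polynomials need not a priori be an iterated torus knot. I would try to exploit that L-space knots are fibered and that their JSJ decompositions are strongly constrained by the Hanselman--Rasmussen--Watson gluing criterion for L-space satellite operations. The hope is to peel off cabling operations one JSJ torus at a time, verifying at each stage that the (free) periodic symmetry descends compatibly with the satellite structure and that any surviving hyperbolic JSJ piece contradicts the coexistence of the L-space property with such a symmetry. Ruling out hyperbolic freely periodic L-space knots outright would already suffice, and this is where I expect the essential difficulty: no topological obstruction is currently known beyond what the Alexander polynomial methods of this paper already provide, so genuinely new geometric input---perhaps via equivariant Heegaard Floer homology of the quotient orbifold---would likely be required to close this step.
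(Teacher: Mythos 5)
The statement you are trying to prove is stated in the paper as a \emph{conjecture} (Conjecture \ref{conj:iteratedtorus}); the paper contains no proof of it and only offers partial evidence, namely a finite computation verifying the weaker Conjecture \ref{conj:polynomial} for genus $g(K) < 13$ unconditionally (and $g(K)<16$ assuming Conjecture \ref{conj:quotient}, which itself is only known to follow from the L-space conjecture), together with the observation that the $P(-2,3,2q+1)$ pretzel knots have no periods. So there is nothing in the paper to compare your argument against, and your proposal should be read as a research program rather than a proof.

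As a program it has two genuine gaps, one in each stage. First, your reduction to ``$\Delta_K$ is a product of cyclotomic polynomials'' does not follow from the paper's results. Theorem \ref{thm:main} gives finiteness of the set of admissible $n$ for a \emph{fixed} non-cyclotomic $\Delta$; it does not give any bound on $E(\Delta)$ in terms of $\deg\Delta$, and no such bound forcing $E(\Delta)<2$ can hold in general (indeed $E$ can be large for polynomials of small degree, and the paper's own Example \ref{exmp:deg30} exhibits degree-$30$ candidates surviving Murasugi's condition). The paper's evidence for Conjecture \ref{conj:polynomial} is an exhaustive finite computation over the finitely many candidate L-space Alexander polynomials of each degree up to $32$, not a uniform argument; your ``residual small cases'' are therefore all cases, one genus at a time, forever. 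In the periodic case you additionally need the quotient to be an L-space knot, which is exactly Conjecture \ref{conj:quotient} and is only known modulo the L-space conjecture (via Proposition \ref{prop:periodic-orderable}). Second, even granting that $\Delta_K$ is a product of cyclotomics, the recognition step ``cyclotomic Alexander polynomial $\Rightarrow$ iterated torus knot'' is precisely the open conjecture of Li and Ni cited in the paper; you correctly identify this as the essential difficulty, but identifying it does not close it. In short, both halves of your plan terminate in open problems, so the proposal is not a proof.
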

      
\begin{conjecture} \label{conj:polynomial}
	Let $K$ be a periodic or freely periodic L-space knot. Then $\Delta_K(t)$ is a product of cyclotomic polynomials. 
\end{conjecture}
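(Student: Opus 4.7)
The plan is to attack Conjecture \ref{conj:polynomial} computationally for Alexander polynomials of bounded genus, since a complete proof in full generality appears out of reach with current tools. The strategy leverages the fact that L-space knots have highly constrained Alexander polynomials: by Ozsv\'ath--Szab\'o, $\Delta_K(t)$ must be monic with coefficients in $\{0, \pm 1\}$ arranged in a prescribed alternating pattern, and $\deg \Delta_K = 2g(K)$. For each fixed genus bound $g$, this yields a finite, explicitly enumerable set of candidate polynomials.

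For each candidate $\Delta(t)$ that is not already a product of cyclotomic polynomials, I would split into the periodic and freely periodic cases. In the freely periodic case, Theorem \ref{thm:main} gives a finite, effectively computable set of integers $n$ for which $\Delta(t)$ could satisfy Hartley's condition; one computes this set via $E(\Delta(t))$ and the divisibility criterion advertised in the introduction. For each such $n$, I would then attempt to reconstruct the hypothetical factor $f(t)$ from Theorem \ref{thm:Hartley} and verify either that no polynomial of the required form exists, or that any such $f(t)$ cannot itself be the Alexander polynomial of a knot (using, for instance, $f(1) = \pm 1$ and the palindromic constraint). In the periodic case, I would iterate over possible quotient polynomials $\Delta_{\overline{K}}(t)$ compatible with Murasugi's factorization (Theorem \ref{thm:Murasugi}), exploiting that $\Delta_{\overline{K}}$ divides $\Delta_K$ and that the two-variable linking factor imposes strong constraints on the remaining quotient.

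Once the candidate lists are produced, the computation is mechanical but voluminous; the paper reports checking over $65{,}000$ polynomials using $E(\Delta(t))$ and Theorem \ref{thm:HartleyFactorization}. Any polynomial surviving the Hartley/Murasugi obstructions at this stage would then be tested against the finer restrictions on L-space Alexander polynomials (e.g., the torsion coefficient conditions of \cite{MR3782416}) or ruled out by a linking-form obstruction specific to the surviving value of $n$.

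The main obstacle is that this strategy is inherently genus-bounded: it produces an unconditional verification only up to the largest genus for which enumeration is feasible (the paper obtains genus less than $13$ unconditionally and less than $17$ under the L-space conjecture). An unbounded proof would require either a structural theorem bridging the alternating $\pm 1$ coefficient pattern of L-space Alexander polynomials to cyclotomicity of their irreducible factors, or genuinely new Heegaard--Floer input ruling out the relevant symmetries directly (which would settle the stronger Conjecture \ref{conj:iteratedtorus}). Absent such input, the number-theoretic machinery referenced from Section \ref{sec:nt}, together with Theorem \ref{thm:main}, is in effect the sharpest tool currently available, and the natural plan is to apply it exhaustively to push the genus threshold as high as computing resources allow.
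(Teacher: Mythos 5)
This statement is a conjecture that the paper does not prove in full generality, and your proposal correctly recognizes this, outlining essentially the same genus-bounded computational verification that the paper carries out in Section \ref{sec:computations}: enumerate the finitely many candidate L-space Alexander polynomials up to a fixed degree, apply the $E(\Delta(t))$ criterion from Theorem \ref{thm:main} for the freely periodic case and Murasugi's factorization for the periodic case. Your plan matches the paper's approach and its stated limitations (unconditional up to genus $12$, conditional on Conjecture \ref{conj:quotient} beyond that), so there is nothing substantive to add.
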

\begin{conjecture} \label{conj:algebra}
Let $f(t)$ be a polynomial whose non-zero coefficients alternate between $1$ and $-1$ and suppose that $f(t)$ is not a product of cyclotomic polynomials. Then $f(t)$ is not $n$-Hartley for any $n > 1$.
\end{conjecture}
In Section \ref{sec:computations} we verify Conjecture \ref{conj:algebra} for polynomials of degree less than $37$.
\begin{conjecture} \label{conj:quotient}
	Let $K$ be a periodic L-space knot with quotient knot $\overline{K}$. Then $\overline{K}$ is an L-space knot.
\end{conjecture}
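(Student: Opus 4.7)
My proposed strategy is to convert an L-space surgery on $K$ into an L-space surgery on $\overline{K}$ via the quotient map. Let $\tau$ be the order-$n$ symmetry with axis $A$, and choose a slope $r \gg 0$ such that $S^3_r(K)$ is an L-space; such an $r$ exists since $K$ is an L-space knot. Because $\tau$ preserves $K$ together with its Seifert longitude, the filling is automatically $\tau$-equivariant, so $\Z/n$ acts on the L-space $S^3_r(K)$ and the quotient $Y := S^3_r(K)/\langle \tau \rangle$ is a closed 3-manifold whose structure we can try to analyze.

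The next step is to identify $Y$ concretely. Deleting an equivariant tubular neighborhood of $K$ and passing to the quotient yields a manifold with torus boundary that compactifies to a Dehn surgery on the link $\overline{K} \cup \overline{A} \subset S^3$, where $\overline{A}$ is the (unknotted) image of the axis. For $r$ coprime to $n$ the induced coefficient on $\overline{K}$ is $r/n$. After simplifying or eliminating the coefficient on $\overline{A}$ (for instance, via a Rolfsen twist when the slope there is $\pm 1/k$), one presents $Y$ as a rational surgery on $\overline{K}$ alone, reducing the problem to showing that this specific surgery is an L-space.

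The central difficulty is showing $Y$ is an L-space. The natural tool is equivariant Heegaard Floer homology (Hendricks--Lipshitz--Sarkar, Large): the $n$-fold cyclic cover $S^3_r(K) \to Y$ carries a spectral sequence whose $E_2$ page involves $\widehat{HF}(S^3_r(K))$ with its $\Z/n$-action and which converges, in a suitable sense, to invariants of $Y$. Because $S^3_r(K)$ is an L-space, its equivariant Floer invariants take the simplest possible form, which one would try to leverage to force minimality of $\widehat{HF}(Y)$.

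The main obstacle is that there is no general principle that quotients of L-spaces under cyclic actions are L-spaces: the above spectral sequence can have nontrivial differentials. One would need to exploit structure specific to L-space knots (monic Alexander polynomial with alternating $\pm 1$ coefficients, fiberedness, and the positivity of L-space surgery slopes) together with Murasugi's factorization in Theorem \ref{thm:Murasugi} to pin down $\Delta_{\overline{K}}$ and rule out non-L-space candidates. A more indirect route is to first establish Conjecture \ref{conj:iteratedtorus}: once $K$ is known to be an iterated torus knot, the classification of cyclic symmetries of such knots gives $\overline{K}$ as an iterated torus knot, and hence automatically an L-space knot, yielding Conjecture \ref{conj:quotient} as a corollary rather than as an independent result.
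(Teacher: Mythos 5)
This statement is labeled a \emph{conjecture} in the paper, and it remains one: the authors do not prove it. What they prove is the conditional result that the L-space conjecture implies it, and your proposal, by your own admission in the final two paragraphs, also does not close the argument. The step you flag as the ``central difficulty'' --- showing that the quotient $Y = S^3_r(K)/\langle\tau\rangle$ is an L-space --- is precisely where the unconditional argument does not exist; the equivariant Heegaard Floer spectral sequence does not in general force $\widehat{HF}(Y)$ to be minimal, and nothing in the stated properties of L-space knots (alternating coefficients, fiberedness, positivity of slopes) is known to rescue it. Your fallback of deducing the statement from Conjecture \ref{conj:iteratedtorus} is exactly the paper's own remark in the introduction, but that conjecture is equally open, so this is not an independent route to a proof.

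It is worth comparing your setup with what the paper actually does to get the conditional result, because the geometric first half of your proposal is essentially correct and is the same as theirs: the periodic symmetry extends over the surgery, and the quotient of $S^3_{np/q}(K)$ is $S^3_{p/q}(\overline{K})$ (you do not need to introduce a surgery coefficient on the image $\overline{A}$ of the axis and then remove it by Rolfsen twists --- $\overline{A}$ is the branch locus, not a surgery component, so the quotient is directly a surgery on $\overline{K}$ alone). Where the paper diverges is in replacing Floer homology with left-orderability: Proposition \ref{prop:periodic-orderable} observes that the quotient map induces a nontrivial homomorphism $\pi_1(S^3_{np/q}(K)) \to \pi_1(S^3_{p/q}(\overline{K}))$, and by Boyer--Rolfsen--Wiest a $3$-manifold group admitting a nontrivial homomorphism to a left-orderable group is left-orderable. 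Thus left-orderability of the quotient surgery lifts to the cover, and the L-space conjecture translates this into the statement that an L-space surgery on $K$ forces the corresponding surgery on $\overline{K}$ to be an L-space. This sidesteps the equivariant Floer difficulty entirely, at the cost of assuming the L-space conjecture. If you want to salvage your write-up, you should present it as a proof of that conditional statement rather than of the conjecture itself, and you would then be reproducing the paper's argument in different language.
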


\begin{remark}
	Conjecture \ref{conj:iteratedtorus} implies Conjectures \ref{conj:polynomial} and \ref{conj:quotient}, since the Alexander polynomial of an iterated torus knot is a product of cyclotomic polynomials (this follows from the cabling formula for the Alexander polynomial), and the quotient of a periodic iterated torus knot is again an interated torus knot and hence is an L-space knot by \cite{MR3546454}.
\end{remark}

\begin{remark}
	There is also a conjecture of Li and Ni \cite[Conjecture 1.3]{MR3485330} that L-space knots with Alexander polynomials which are products of cyclotomic polynomials are iterated torus knots. This would imply that Conjecture \ref{conj:polynomial} is equivalent to Conjecture \ref{conj:iteratedtorus}.
\end{remark}

\begin{remark}
Conjecture \ref{conj:algebra} implies Conjecture \ref{conj:polynomial}, since Alexander polynomials of L-space knots satisfy the given conditions; see \cite[Corollary 1.3]{MR2168576}.
\end{remark}

\begin{remark}
In Conjecture \ref{conj:algebra}, each condition is necessary. Indeed, consider the polynomials
\begin{itemize}
  \item $t^{10} - t^9 - t^7 + t^6 - t^5 + t^4 - t^3 - t + 1$, which has non-alternating coefficients and is 2-Hartley,
  \item $t^2 - 3t + 1$, which has a coefficient not in $\{-1,0,1\}$ and is 2-Hartley, and
  \item $t^2 - t + 1$, which is cyclotomic and 5-Hartley.
\end{itemize}
Furthermore, these polynomials are all symmetric and have $f(1) = \pm 1$ so that even within the class of Alexander polynomials each condition is necessary.
\end{remark}

\begin{remark}
	It was once conjectured that L-space knots must be strongly invertible; see \cite[Section 6]{MR3649241}. However, Baker and Luecke have produced examples of L-space knots with no symmetries \cite[Theorem 1.2]{MR4194294}. Conjecture \ref{conj:iteratedtorus} is then a partial converse, claiming that L-space knots cannot have more than one strong inversion.
\end{remark}

The ``L-space conjecture'' of  \cite{MR3072799} claims that an irreducible rational homology $3$-sphere has a left-orderable fundamental group if and only if it is not an L-space. We prove in Corollary \ref{cor:Lspace} that the L-space conjecture implies Conjecture \ref{conj:quotient}, and we observe that Conjecture \ref{conj:iteratedtorus} is true for the $P(-2,3,2q+1)$ pretzel knots by \cite{MR891592}.

\subsection{Notation and conventions}
\label{sec:notation-conventions}

We write $\bar \Q \subset \C$ for the field of algebraic numbers, i.e., the algebraic closure of $\Q$ in $\C$. If $n$ is a positive integer, then  $\zeta_n =\exp(2 \pi i/n)$ denotes a primitive $n$th root of $1$, and $\Phi_n(t) \in \Z[t]$ denotes its minimal polynomial, the $n$th cyclotomic polynomial.

If $L/K$ is an extension of fields with Galois group $G$, and if $\sigma \in G$ and $f(t) \in L[t]$, then we write $\sigma f$ to denote the polynomial in $L[t]$ whose coefficients are obtained from those of $f$ by applying $\sigma$. Note that $\sigma(f(a)) = \sigma f (\sigma a) \neq \sigma f(a)$ in general.

A nonzero polynomial $f(t) \in \Z[t]$ is said to be \emph{primitive} if the greatest common divisor of the coefficients of $f$ is $1$. A polynomial $f(t) \in \Z[t]$ is said to be \emph{irreducible} if it is irreducible as an element of $\Q[t]$, so that for a nonzero $f(t) \in \Z[t]$,  ``irreducible and primitive'' is synonymous with ``irreducible in $\Z[t]$''.

\subsection{Acknowledgments} We would like to thank Liam Watson for helpful conversations. The second author is supported by the Pacific Institute for Mathematical Sciences (PIMS) through the grant ``Novel Techniques in Low Dimensions''.

The third author acknowledges the support of the Natural Sciences and Engineering Research Council of Canada (NSERC), RGPIN-2021-02603 and of PIMS through the grants ``Novel Techniques in Low Dimensions'' and CRG41.
\section{Number-theoretic interpretation of Hartley's condition} \label{sec:nt}

We define a slightly more general condition that agrees with Hartley's factorization condition when the polynomial in question is an Alexander polynomial.
\begin{definition} \label{defn:Hartley}
  Let $\Delta(t)$ be a polynomial with integer coefficients and $n \in \Z_{\geq 2}$. We say that $\Delta(t)$ is \textbf{n-Hartley} if there exists $g(t) \in \Z[t]$ such that 
  \begin{equation}
    \label{eq:3}
      \Delta(t^n) = \pm \prod_{i=0}^{n-1}g(\zeta_n^i t).
  \end{equation}
\end{definition}
We prove now that every Alexander polynomial that is $n$-Hartley and is normalized to have positive constant term satisfies Hartley's factorization condition as given in Theorem \ref{thm:Hartley}.
\begin{proposition} \label{prop:nHartleyImpliesHartley}
  Let $\Delta(t)$ be $n$-Hartley with positive constant term. Then $\Delta(t)$ satisfies Hartley's factorization condition \ref{thm:Hartley}.
\end{proposition}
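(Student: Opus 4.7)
The plan is to use the positive constant term hypothesis to pin down the ambiguous sign $\epsilon \in \{\pm 1\}$ appearing in the $n$-Hartley factorization. Starting from $\Delta(t^n) = \epsilon \prod_{i=0}^{n-1} g(\zeta_n^i t)$ with $g \in \Z[t]$, I would substitute $t = 0$ to obtain $\Delta(0) = \epsilon g(0)^n$. Since $\Delta(0) > 0$ by hypothesis, $g(0)$ must be nonzero. When $n$ is even the quantity $g(0)^n$ is positive, forcing $\epsilon = +1$ immediately. When $n$ is odd, $g(0)^n$ carries the same sign as $g(0)$, so if one finds $\epsilon = -1$ initially, I would replace $g$ by $-g$: this multiplies $\prod_i g(\zeta_n^i t)$ by $(-1)^n = -1$, absorbing the sign into the new polynomial $-g \in \Z[t]$. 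Either way one obtains $\Delta(t^n) = \prod_{i=0}^{n-1} g(\zeta_n^i t)$, matching the form required by Theorem \ref{thm:Hartley}.

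The main subtlety is whether the factor $g$ produced above qualifies as "an Alexander polynomial of a knot" in the strict sense of Theorem \ref{thm:Hartley}. One consequence is immediate: evaluating the factorization at $t = 1$ gives $\Delta(1) = g(1) \cdot \prod_{i=1}^{n-1} g(\zeta_n^i)$, where the latter product equals the resultant of $g(t)$ with $1 + t + \cdots + t^{n-1}$ and is therefore a rational integer. Consequently $g(1)$ divides $\Delta(1) = \pm 1$ in $\Z$, yielding $g(1) = \pm 1$. Deducing palindromy of $g$ from palindromy of $\Delta$ is the genuinely delicate step, and I would expect the proposition to be read as delivering precisely the factorization identity (with the $\pm$ sign resolved) that feeds into the paper's subsequent number-theoretic analysis, with any further knot-theoretic properties of $g$ either inherited from the hypothesis that $\Delta$ itself is an Alexander polynomial or addressed separately.
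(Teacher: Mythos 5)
Your sign argument is exactly the paper's proof: substitute $t=0$ to get $\Delta(0)=\epsilon\, g(0)^n$, note $g(0)^n>0$ forces $\epsilon=+1$ when $n$ is even, and absorb the sign into $-g$ when $n$ is odd. Your second paragraph worries about whether $g$ is genuinely an Alexander polynomial, but the paper's own proof does not address this either --- it treats the proposition as resolving only the sign --- so your proposal is, if anything, slightly more thorough than the original.
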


\begin{proof}
  Suppose that $\Delta(t^n)$ factors as
  \[
    \Delta(t^n) = (-1)^j \prod_{i=0}^{n-1}g(\zeta_n^i t)
  \]
  for $j=0$ or $1$. First observe that if $j=1$ and $n$ is odd, then we can replace $g(t)$ with $-g(t)$ to obtain $j=0$, so suppose $n$ is even. Then $\Delta(0) = (-1)^j g(0)^n$, so $j=0$.
\end{proof}
The two conditions are not the same, however. The reader may easily verify that $t-1$ is $2$-Hartley in the sense of this paper, but does not satisfy Hartley's original condition.\benw{I added this comment in lieu of a confusing aside before Example 2.7 below.}

\benw[inline]{Added, because we use it later, and it is easy to put here.}
The next lemma is easy to prove, and is useful to rule out the $n$-Hartley condition from a great many polynomials.
\begin{lemma} \label{lem:leadingConst}
  Let $\Delta(t)$ be $n$-Hartley, and let $a,b$ denote the leading and constant coefficients of $\Delta(t)$. Then $|a|$ and $|b|$ are $n$-th powers in $\Z$.
\end{lemma}
\begin{proof}
  Suppose a factorization as in \eqref{eq:3} exists, and that the degree of $g(t)$ is $r$. Let $c$ denote the leading coefficient of $g(t) \in \Z[t]$. Comparing leading coefficients, we see
  \[ a = \pm c^n \prod_{i=0}^{n-1} \zeta_n^{ir}. \]
  Taking moduli on each side gives $|a| = |c|^n$.

  The argument for the constant coefficient is similar.
\end{proof}
This lemma gives a quick proof of Theorem \ref{thm:main} except when the leading and constant coefficients of $\Delta(t)$ lie in $\{-1,1\}$. The rest of this section is devoted to proving the remaining cases. Although we develop the theory in some generality, the reader might bear in mind that it will be applied only to polynomials whose leading and constant coefficients lie in $\{-1,1\}$.

\medskip

The following lemma and its corollary will be used to establish the Hartley condition using field-theoretic methods.

\begin{lemma}\label{lem:primHartley}
  Let $n$ be a natural number. Suppose $\Delta(t), g(t) \in \Z[t]$ are polynomials that satisfy
  \[ \Delta(t^n) = \pm \prod_{j=0}^{n-1} g(\zeta_n^j t). \]
  The polynomial $\Delta(t)$ is primitive if and only if $g(t)$ is primitive.
\end{lemma}
\begin{proof}
  In one direction, this is straightforward. Suppose $\Delta(t)$ is primitive. The polynomial $g(t)$ is a factor of $\Delta(t)$ in $\Z[t]$, which implies that $g(t)$ is primitive.

  We prove the other direction. We suppose $g(t)$ is primitive. The nonzero coefficients of $\Delta(t^n)$, as a set, coincide with the nonzero coefficients of $\Delta(t)$, so it suffices to prove that $\Delta(t^n)$ is primitive. We prove that there is no prime number $p$ that divides all the coefficients of $\Delta(t^n)$.

  Let $p$ be an arbitrary prime number and fix an algebraic closure $\bar \F_p$ of the field with $p$ elements. There is a unique homomorphism $h_p \colon \Z[t] \to \bar \F_p[t]$ of rings with the property that $t \mapsto t$.
  
  There exists an extension of $h_p$ to
  \[ \bar h_p : \Z[\zeta_n, t] \to \bar \F_p[t] \]
  given by defining $\bar h_p(\zeta_n)$ to be some root of $h_p(\Phi_n(t)) \in \bar \F_p[t]$. We see that $\bar h_p(\zeta_n) \neq 0$, since in particular $h_p(\zeta_n)$ is a root of $h_p(\Phi_n(t))$, and therefore of $t^n-1$.

  For each $j \in \{1, \dots, n-1\}$, the coefficients of the polynomial $g(\zeta_n^j t)$ consist of some power of $\zeta_n$ times a coefficient of $g(t)$. Therefore the coefficients of $\bar h_p(g(\zeta_n^j t))$ consist of some power of $\bar h_p(\zeta_n)$ times a coefficient of $\bar h_p(g(t))$, and the coefficients of $\bar h_p(g(t))$ are not all $0$ since $g(t)$ is primitive. Notably, $\bar h_p(g(\zeta_n^j t)) \neq 0$.

  Applying $\bar h_p$ to the factorization gives us
  \[ h_p(\Delta(t^n)) = \bar h_p (\Delta(t^n)) = \pm \prod_{j=0}^{n-1} \bar h_p(g(\zeta_n^j t)) \]
  where the left equality holds because $\Delta(t^n)$ is an integer polynomial. The product on the right is a product of nonzero terms in $\bar \F_p[t]$, which is an integral domain. We conclude that $h_p(\Delta(t^n)) \neq 0$.

  Since $p$ is an arbitrary prime number, we deduce that $\Delta(t^n)$, and therefore $\Delta(t)$, is a primitive polynomial.
\end{proof}

\begin{corollary}\label{cor:QHartleyZHartley}
  Let $n$ be a positive integer. Suppose $\Delta(t), g(t) \in \Z[t]$ are primitive polynomials, and that a factorization
    \begin{equation}
      \label{eq:1}
      \Delta(t^n) = u \prod_{j=0}^{n-1} g(\zeta_n^j t)
    \end{equation}
  exists where $u\in \bar \Q^\times$. Then $u \in \{-1,1\}$, and in particular $\Delta(t)$ is $n$-Hartley.
\end{corollary}
\begin{proof}
  As observed in the proof of Lemma \ref{lem:primHartley}, the polynomial $\Delta(t^n)$ is also primitive.

  The elements $\sigma$ of the Galois group $\Gal(\Q(\zeta_n)/\Q)$ act by permuting the powers $\zeta_n^j$. In particular, any such $\sigma$ permutes the polynomials $g(\zeta_n^j t)$, and therefore fixes the polynomial
  \[ \prod_{i=0}^{n-1} g(\zeta_n^j t),\]
  which therefore lies in $\Q[t]$. Its coefficients are algebraic integers, so in fact it lies in $\Z[t]$. It follows that $u$ must lie in $\Q^\times$. We may write
  \[ a \Delta(t^n) = b \prod_{j=0}^{n-1}g(\zeta_n^j t) \]
  for some relatively prime integers $a,b$. Since $\Delta(t^n)$ is primitive, we see that $b=\pm 1$. Using Lemma \ref{lem:primHartley}, the primitivity of $g(t)$ implies that $a \Delta(t^n)$ is primitive, so that $a=\pm 1$.
\end{proof}

\kgnb[inline]{There used to be a lemma that if $\Delta(t)$ is irreducible, so is $g(t)$, but it wasn't cited anywhere, so I've removed it. Anyway, if it's useful at all we can add it back in.}
\benw[inline]{I think we should add it back. It combines with Corollary \ref{cor:QHartleyZHartley} in the following way: Suppose $\Delta(t)$ is irreducible and primitive in $\Z[t]$. If $\Delta(t)$ has the following factorization
  \[ \Delta(t) = u \prod_{i=1}^n g(\zeta_n^it), \quad u \in \bar \Q^\times \]
  over $\bar \Q$, then $\Delta$ is $n$-Hartley.
This is actually an if \& only if statement, the other direction being easier. Anyway, it does allow us to reduce the testing of the Hartley property entirely to $\Q$-calculations.
}
\kgnb[inline]{I've added it back in (below). To do: modify it to address Ben's comment above (and maybe merge the statement with the previous lemma?)}
\benw[inline]{It turns out not to be necessary after all. I changed ``lemma'' to ``proposition'' because we don't use it, and I'd be open to omitting it again.}
\kgnb[inline]{It's commented below.}


\begin{proposition} \label{prop:coprimeHartley}
  Let $f(t), h(t) \in \Z[t]$ be coprime and primitive. Then $f(t)h(t)$ is $n$-Hartley if and only if $f(t)$ and $h(t)$ are $n$-Hartley. 
\end{proposition}
\begin{proof}
  The reverse direction is straightforward; indeed, if $f(t)$ and $h(t)$ are each $n$-Hartley, then we have
  \[
    f(t^n)h(t^n) = \prod_{i=0}^{n-1}F(\zeta_n^i t) \prod_{i=0}^{n-1}H(\zeta_n^i t) = \prod_{i=0}^{n-1}F(\zeta_n^i t)H(\zeta_n^i t),
  \]
  for some polynomials $F$ and $H$.
  
  Conversely, suppose that $f(t)h(t)$ is $n$-Hartley so that there is $g(t) \in \Z[t]$ with
  \begin{equation} \label{eq:hartleyfactor}
    f(t^n)h(t^n) = \prod_{i=0}^{n-1}g(\zeta_n^i t).
  \end{equation}
  We will partition the roots of the right hand side of Equation \ref{eq:hartleyfactor} into roots of $f(t^n)$ and $h(t^n)$. To begin, factor $g(t)$ into irreducible factors $g(t) = g_1(t)g_2(t) \dots$. Since $f(t)$ and $h(t)$ are coprime, $f(t^n)$ and $h(t^n)$ have no roots in common, so each $g_j(t)$ divides exactly one of $f(t^n)$ or $h(t^n)$. Futhermore, note that $g_j(\zeta_n^i t)$ divides $f(t^n)$ if and only if $g_j(t)$ does. Now let $F(t)$ be the product of the factors of $g(t)$ dividing $f(t^n)$ and let $H(t)$ be the product of the factors of $g(t)$ dividing $h(t^n)$. Then $g(t) = F(t)H(t)$, and in fact
  \[
    f(t^n) = c_1 \prod_{i=0}^{n-1}F(\zeta_n^i t), \mbox{ and } h(t^n) = c_2 \prod_{i=0}^{n-1}H(\zeta_n^i t)
  \]
  for some $c_1,c_2 \in \Z$. Since $f(t)$ and $h(t)$ are primitive, we have $c_1,c_2 = \pm1$.
\end{proof}

 Proposition \ref{prop:coprimeHartley} motivates our mild generalization of Hartley's original condition to allow for a minus sign. It would not hold without that generalization, as can be deduced from the following example.
\begin{example} \label{coexmp:fake2Hartley}
  Let $K$ be the knot \texttt{K14n26330}. Its Alexander polynomial is
  \[
    \Delta(t) = 4t^6 - 17t^5 + 38t^4 - 51t^3 + 38t^2 - 17t + 4,
  \] so that
  \[
    \Delta(t^2) = (2t^6 - 3t^5 - 2t^4 + 7t^3 - 2t^2 - 3t + 2)(2t^6 + 3t^5 - 2t^4 - 7t^3 - 2t^2 + 3t + 2),
  \]
  so $\Delta(t)$ is $2$-Hartley. However $\Delta(t)$ factors into irreducibles as
  \[
    \Delta(t) = (t^3 - 3t^2 + 5t - 4)(4t^3 - 5t^2 + 3t - 1),
  \]
  and neither irreducible factor satisfies Hartley's original condition with the positive sign and $p=2$.

  If we write $f_1(t)$ and $f_2(t)$ for the irreducible factors of $\Delta(t)$, then $f_1(t^2)$ factors as $-h_1(t)h_1(-t)$ for $h_1(t) = t^3 - t^2 - t + 2$, and $f_2(t^2)$ factors as $-h_2(t)h_2(-t)$ for $h_2(t) = 2t^3 - t^2 - t + 1$. Then $\Delta(t^2) = f_1(t^2)f_2(t^2) = h_1(t)h_2(t)h_1(-t)h_2(-t)$, and $\Delta(t)$ does satisfy Hartley's condition with $p=2$.
\end{example}
        
Most of our results hinge on the following characterization of $n$-Hartley polynomials. 
\begin{proposition} \label{prop:pthPowerIffpHartleyBen}
  Let $s$, $n$ be positive integers. Suppose $\Delta(t) = h(t)^s \in \Z[t]$ is an $s$th power of an irreducible, primitive, nonconstant polynomial $h(t)$. Let $\alpha$ be a root of $\Delta(t)$. The polynomial $\Delta(t)$ is $n$-Hartley if and only if there exists a polynomial $q(t) \in \Q(\alpha)[t]$ having the following properties:
  \begin{enumerate}
  \item $\deg(q(t)) = s$;
  \item every root of $q(t)$ is an $n$th root of $\alpha$.
  \end{enumerate}
\end{proposition}
\begin{proof}
  The proposition is trivially true if $h(t) = \pm t$. Therefore we can assume for the rest of the proof that $h(t)$ does not have $0$ as a root. The rest of the proof is in two parts.
  \smallskip  

  Suppose a polynomial $q(t)$ exists satisfying the given conditions. We will construct $g(t)$ as a product of Galois conjugates of $q(t)$.

  Let $F$ be a splitting field for $h(t)$ over $\Q$ and write $G = \Gal(F/\Q)$. This group acts transitively on the roots of the irreducible polynomial $h(t)$. Let $H \subseteq G$ denote the stabilizer of $\alpha$ under this action.

  Choose a set $\gamma_1, \dots, \gamma_m$ of coset representatives for $G/H$ so that $\gamma_1 \alpha, \dots , \gamma_m \alpha$ constitute the roots of $h(t)$. Recall that we write $\gamma_iq(t)$ to denote the polynomial whose coefficients are obtained by applying $\gamma_i$ to the coefficients of $q$. Observe that $q(t)$ is invariant under the action of $H$, since its coefficients lie in $\Q(\alpha)$.
  
  Now let us define the polynomial
  \[ g(t) = \prod_{i=1}^m \gamma_iq(t) \]
  which has degree $ms$. Its roots, taken with multiplicity, are $s$ solutions to each of the $m$ different equations $t^n = \gamma_i \alpha$ for $i \in \{1, \dots, m\}$.

  We claim $g(t)$ is defined over $\Q$. It is certainly defined over $F$. If $\sigma \in G$ is an element, then for any $i \in \{1,\dots, m\}$ there exists some $j \in \{1, \dots, m\}$ for which the cosets $\sigma \gamma_i H$ and $\gamma_jH $ coincide. Then
  \[ \sigma(\gamma_iq) (t) = (\sigma\gamma_i)q(t) = \gamma_jq(t). \]
  Therefore $g(t)$ is invariant under the action of $G$, and consequently its coefficients lie in $\Q$, establishing our claim.

  We now clear denominators in $g(t)$ to yield a primitive polynomial in $\Z[t]$ having the same roots, which we continue to denote $g(t)$ to keep the notation simple.
  
  We define the polynomial
  \[ d(t)= \prod_{j=0}^{n-1} g(\zeta_n^j t), \]
  which has degree $mns$. As we will see below, this differs from $\Delta(t)$ only by a unit, allowing us to apply \ref{cor:QHartleyZHartley}.

  For the moment, we fix some $i \in \{1, \dots, m\}$. The polynomial $g(t)$ has exactly $s$ roots (counted with multiplicity) that satisfy $t^n = \gamma_i \alpha$, as observed above. Let $\beta$ be an $n$th root of $\gamma_i$, so that these $s$ roots are
  \[ \zeta_n^{a_1} \beta, \zeta_n^{a_2} \beta, \dots , \zeta_n^{a_s} \beta \] where
  $a_1, \dots, a_s$ is a list of elements of $\Z/n\Z$. The polynomial $d(t)$ has exactly $ns$ roots (counted with multiplicity) that satisfy $t^n = \gamma_i \alpha$. Listed, they are
  \[  \zeta_n^{a_1-0}\beta, \zeta_n^{a_1-1}\beta, \dots, \zeta_n^{a_1 - n + 1}\beta, \zeta_n^{a_2-0}\beta, \zeta_n^{a_2-1}\beta, \dots, \zeta_n^{a_2-n+1}\beta, \dots, \zeta_n^{a_s-0}\beta, \zeta_n^{a_s-1}\beta, \dots, \zeta_n^{a_s-n+1}\beta. \]
  We observe that each power $\zeta_n^j$ appears $s$ times in this list, no matter what the values of $a_1, \dots, a_s$ are. The list is precisely the roots of $(t^n-\gamma_i \alpha)^s$.

  Since this holds for an arbitrary $i$, we deduce that the roots of $d(t)$, with multiplicity, are exactly the roots of $h(t^n)^s = \Delta(t^n)$. Therefore we deduce
  \[ \Delta(t^n) = u \prod_{i=0}^{n-1} g(\zeta_n^j t) \]
  for some scalar $u \in \bar \Q^\times$. Since $h(t)$ is primitive, $\Delta(t^n)$ is primitive and Corollary \ref{cor:QHartleyZHartley} applies to tell us that $\Delta(t)$ is $n$-Hartley.

  \smallskip
  Now we prove the reverse direction. Suppose $\Delta(t)=h(t)^s$ is $n$-Hartley, and write
  \[ \Delta(t^n) = \pm \prod_{j=0}^{n-1} g(\zeta_n^j t), \]
  where the polynomial $g(t)$ is defined over $\Z$. We wish to produce the polynomial $q(t)$.

  Since $\alpha$ is an $s$-fold root of $\Delta(t)$, there are $n$ different roots of $\Delta(t^n)$ that also satisfy $t^n - \alpha =0$. They may be listed as
  \[ \beta, \zeta_n \beta, \zeta_n^2\beta , \dots \zeta_n^{n-1} \beta \]
  each appearing with multiplicity $s$. With multiplicity, $s$ of these are roots of $g(t)$, another $s$ are roots of $g(\zeta_n t)$ and so on.
 Now construct the greatest common divisor $q(t)$ of $(t^n - \alpha)^s$ and $g(t)$ in the ring $\Q(\alpha)[t]$. This has exactly $s$ roots, all of which are $n$th roots of $\alpha$, and is the polynomial we wanted.  
\end{proof}

Determining whether $q(t)$ exists in Proposition \ref{prop:pthPowerIffpHartleyBen} may be time consuming. The following is a useful shortcut for ruling out the existence of such a $q(t)$.
\begin{corollary}\label{cor:powerRuleFornHartley}
  Let $s,n$ be positive integers. Suppose $\Delta(t) = h(t)^s \in \Z[t]$ is an $s$th power of an irreducible primitive nonconstant polynomial $h(t)$. Let $\alpha$ be a root of $\Delta(t)$. If $\Delta(t)$ is $n$-Hartley, then there exists an element $\theta \in \Q(\alpha)$ such that $\theta^n = \alpha^s$.
\end{corollary}
\begin{proof}
  Assuming $\Delta(t)$ is $n$-Hartley, then a polynomial $q(t) \in \Q(\alpha)[t]$ exists as in Proposition \ref{prop:pthPowerIffpHartleyBen}. Let $\theta$ be $(-1)^s$ times the constant coefficient of $q(t)$. It is the product of $s$ values, all of which are $n$th roots of $\alpha$, so that $\theta^n = \alpha^s$, as required.
\end{proof}

When $s=1$, Proposition \ref{prop:pthPowerIffpHartleyBen} simplifies considerably and the converse of Corollary \ref{cor:powerRuleFornHartley} holds. We give the statement of this case as corollary for ease of reference. It is simply a special case of Proposition \ref{prop:pthPowerIffpHartleyBen}.
\begin{corollary} \label{cor:iff}
  Let $n$ be a positive integer. Suppose $\Delta(t) \in \Z[t]$ is an irreducible, primitive, nonconstant polynomial. Let $\alpha$ be a root of $\Delta(t)$. The polynomial $\Delta(t)$ is $n$-Hartley if and only if $\alpha$ has an $n$th root in $\Q(\alpha)$.
\end{corollary}
 
\begin{remark} \label{rmk:iff}
  In general, the converse of Corollary \ref{cor:powerRuleFornHartley} fails. For example, let $m$ be a positive integer and consider the polynomial $\Delta(t) = (t+m^4)^2$. The unique root $\alpha$ of this polynomial is $-m^4$, and $\Q(\alpha) = \Q$.  Set $n=4$. The field $\Q$  contains an element $\epsilon$ for which $\epsilon^4 = \alpha^2$, viz., $\epsilon=m^2$. But the polynomial $\Delta(t)$ does not satisfy the $4$-Hartley condition. If it did, there would be a degree-$2$ polynomial $q(t)$ defined over $\Q$ whose roots would be drawn from the set of $4$th roots of $\alpha=-m^4$, which is $\{\zeta_8m, \zeta_8^3m, \zeta_8^5m, \zeta_8^7m\}$. Since $\zeta_8$ is defined only after a degree-$4$ extension of $\Q$, this is impossible.
\end{remark}
We also observe that we cannot reduce all calculations to the case when $s =1$ as in Corollary \ref{cor:iff}. Indeed, one might hope that whenever $h(t)^s$ is $(sn)$-Hartley, that $h(t)$ is $n$-Hartley. This is not the case, even for Alexander polynomials.
\begin{example}
Consider the primitive and irreducible polynomial $\Delta(t) = t^8 + 2t^7 - 5t^6 + 5t^4 - 5t^2 + 2t + 1$, which is symmetric and satisfies $\Delta(1) = 1$ so that $\Delta(t)$ is the Alexander polynomial of some knot (see e.g., \cite{Levine}). A computer easily verifies that $\Delta(t^2)$ is irreducible and so $\Delta(t)$ is not $2$-Hartley. However, $\Delta(t)^2$ is $4$-Hartley with factor
	\[
	g(t) = t^{16} - 2t^{15} + 2t^{14} - t^{12} + 2t^{11} - 2t^{10} + t^8 - 2t^6 + 2t^5 - t^4 + 2t^2 - 2t + 1.
	\]
\end{example}
The following proposition will also reduce the necessary computations to determine the complete set of integers $n$ for which a polynomial is $n$-Hartley.
\begin{proposition} \label{prop:primepowers}
  Let $n, m$ be positive integers for which $m \mid n$. If $\Delta(t) \in \Z[t]$ is a primitive, nonconstant $n$-Hartley polynomial, it is $m$-Hartley.
\end{proposition}
\begin{proof}
  By use of Proposition \ref{prop:coprimeHartley}, we can reduce the problem to the case where $\Delta(t) = h(t)^s$ is an $s$th power of an irreducible primitive polynomial.

  Assume $\Delta(t)$ is $n$-Hartley. Let $\alpha$ be a root of $\Delta(t)$.  Proposition~\ref{prop:pthPowerIffpHartleyBen} tells us that there exists a degree-$s$ polynomial $q(t) \in \Q(\alpha)[t]$, every root of which is an $n$th root of $\alpha$. Let $\beta_1, \dots, \beta_s$ be these roots, listed with multiplicity. Without loss of generality, let us scale $q(t)$ so that it is monic.

  Write $d$ for $n/m$, which is a positive integer. Let $q_0(t)$ be the monic polynomial over $\bar \Q$ whose roots are $\beta_1^d, \beta_2^d, \dots, \beta_s^d$. The polynomial $q_0(t)$ has degree $s$. Its roots are $m$th roots of $\alpha$. Its coefficients are symmetric polynomials in $\beta_1, \dots, \beta_s$, and therefore may themselves be expressed as polynomials in the coefficients of $q(t)$. In particular, $q_0(t) \in \Q(\alpha)[t]$. Therefore $\Delta(t)$ has the $m$-Hartley condition, by Proposition \ref{prop:pthPowerIffpHartleyBen}.          
\end{proof}

\begin{proof}[Proof of Proposition \ref{prop:cycloHartley}]
Let us write $n$ as a product of two integers $n=ab$, where the prime factors of $a$ are a subset of those of $m$, and $b$ is relatively prime to $m$. For later use, we choose some positive integer $c$ for which $bc \equiv 1 \pmod m$.

We first claim that the polynomial $\Phi_m(t)^k$ is $n$-Hartley if and only if it is $a$-Hartley.

Certainly if it is $n$-Hartley, it is $a$-Hartley, by Proposition \ref{prop:primepowers}. Conversely, suppose it is $a$-Hartley. Then by Proposition \ref{prop:pthPowerIffpHartleyBen}, there exists a degree-$k$ polynomial $q(t) \in \Q(\zeta_m)[t]$ whose roots are $a$th roots of the root $\zeta_m^c$ of $\Phi(t)$. If $\beta$ satisfies $\beta^a = \zeta_m^c$, then raising both sides to the power of $b$ gives us $\beta^n = \zeta_m^{bc} = \zeta_m$. Therefore the roots of $q(t)$ are also $n$th roots of $\zeta_m$, which is also a root of $\Phi_m(t)^k$. Proposition \ref{prop:pthPowerIffpHartleyBen} now implies that $\Phi_m(t)^k$ is $n$-Hartley, establishing our claim.

\smallskip

We must therefore determine the conditions under which $\Phi_m(t)^k$ is $a$-Hartley. We will use Proposition \ref{prop:pthPowerIffpHartleyBen} again. Consequently, our next task is to determine the minimal polynomials satisfied by $a$th roots of $\zeta_m$ over $\Q(\zeta_m)$.

The $a$th roots of $\zeta_m$ are the numbers
\[ \zeta_{am}, \zeta_{am}^{m+1}, \zeta_{am}^{2m+1}, \dots , \zeta_{am}^{am-m+1}, \]
all of which are primitive $am$th roots of unity, since the exponents $jm+1$ are relatively prime to $m$ and therefore also to $am$, which has the same prime divisors. These roots lie in the field $\Q(\zeta_{am})$. We determine that the degree $[\Q(\zeta_{am}): \Q(\zeta)]$ is $a$ by considering the degrees of the cyclotomic extensions of $\Q$ in the following tower:
\[
  \begin{tikzcd}
    \Q(\zeta_{am}) \arrow[dd, no head, "\phi(am)"'] \arrow[dr, no head, "a"]&  \\ & \Q(\zeta_m) \arrow[dl, no head, "\phi(m)"] \\ \Q. &
  \end{tikzcd}
\]
Here the edges are labelled by the degrees of the field extensions, and $\phi(x)$ is Euler's totient function $\phi(x) = x \prod_{p}\big(1-\frac{1}{p}\big)$ where the product is taken over prime divisors $p$ of $x$. The degree $a=[\Q(\zeta_{am})\colon \Q(\zeta_m)]$ is calculated by observing that $m$ and $am$ have the same prime divisors. In particular, the degree of a minimal polynomial of $\zeta_{am}$ over $\Q(\zeta_m)$ is $a$. Since $\zeta_{am}$ satisfies the degree-$a$ polynomial $t^a - \zeta_m$, we conclude that  this is the minimal polynomial and is, in particular, irreducible over $\Q(\zeta_m)$. Its roots are all the $a$th roots of $\zeta_m$.

According to Proposition \ref{prop:pthPowerIffpHartleyBen}, the polynomial $\Phi_m(t)^k$ is $a$-Hartley if and only if there exists a degree-$k$ polynomial $q(t)$ defined over $\Q(\zeta_m)$ all of whose roots are $a$th roots of $\zeta_m$. Any polynomial $q(t)$ defined over $\Q(\zeta_m)$ all of whose roots are $a$th roots of $\zeta_m$ must be of the form $(t^a-\zeta_m)^j$ for some positive integer $j$. Therefore, such a polynomial can have degree $k$ if and only if $a$ divides $k$. Since $a=n/b$, this proves the result.
\end{proof}

\par

Now we set about producing an algorithm that can determine the integers $n$ for which a given $\Delta(t)$ is $n$-Hartley. \benw{rewritten a bit.}

Inspired by \cite[\S 4.1]{SchinzelPolynomials}, we make the following definition. 
	\begin{definition} \label{defn:schinzelDegree}
		Let $\alpha$ be a nonzero\benw{added ``nonzero''.} algebraic number. Let
		\[
		D(\alpha) = \sup\{D \in \Z_{\ge 0} \mid \alpha = \theta^D \text{ for some } \theta \in \Q(\alpha)\},
              \]
              the supremum being taken with respect to divisibility.
		For a polynomial $f(t) \in \Q[t]$ of at least two terms whose distinct roots $\alpha_i$ appear with multiplicity $m_i$, define
		\[
		D(f(t)) = \gcd_i(D(\alpha_i^{m_i})).
		\]
	\end{definition}
        Every set of nonnegative integers has a supremum with respect to divisibility: if the set is infinite then the supremum is $0$, the maximal nonnegative integer with respect to this ordering.

        In particular,
        \[ D(\alpha) = 0 \quad \text{ if $\alpha$ is a root of unity,} \]
        since (for instance) any such $\alpha$ satisfies $\alpha^D = \alpha$ for infinitely many nonnegative integers $D$.


          
        
        In all cases, $D(\alpha)$ is Galois invariant, in that $D(\alpha) = D(\sigma(\alpha))$ for all $\sigma$ in the Galois group of $\Q(\alpha)$ over $\Q$. 
        
        If $\alpha$ is not a root of unity and $n \mid D(\alpha)$, then $\alpha$ admits an $n$th root in $\Q(\alpha)$. To see this, use the Chinese remainder theorem to reduce to the case where $n=p^r$ is a power of a prime, where it is immediate since the supremum was taken with respect to divisibility. In particular, $\alpha$ admits a $D(\alpha)$th root in $\Q(\alpha)$, i.e., the supremum is actually attained.

       If $\alpha$ is not a root of unity, then $D(\alpha)$ is a positive integer, so that for a polynomial $f(t)$, one has $D(f(t)) = 0$ precisely when $f(t)$ is a product of cyclotomic polynomials.\benw{rewrote the preceding text a bit.}
       If $\alpha$ is an algebraic number, then we say $\alpha$ is an \emph{algebraic unit} if $\alpha$ is in $\mathcal{O}_{\Q(\alpha)}^\times$, the group of units of the ring of integers in $\Q(\alpha)$.\benw{added. I think we need it, but I am willing to be contradicted. Also: would we be happier defining $\alpha$ only for algebraic integers? There's no reason to, except that we don't consider $D(\alpha)$ in other cases.}
       
       If $\alpha$ is a nonzero algebraic number and $\primeIdealWasl$ is a prime of $\Q(\alpha)$, i.e., a prime ideal in $\mathcal{O}_{\Q(\alpha)}$, then we write $v_{\primeIdealWasl}(\alpha)$ for the valuation of $\alpha$ at $\primeIdealWasl$.

       Our next two propositions show that $D(f(t))$ is finite and that there is a\benw{I removed ``effectively'' because I don't know what it means. Put it back if you insist.} computable upper bound. The explicit formula will be leveraged for the calculations in the next section.

       \begin{proposition}\label{prop:SchinzelDegreeCalculationEasy}
         Let $\alpha$ be a nonzero algebraic number. The positive integer $D(\alpha)$ satisfies the divisibility relation:
         \[        D(\alpha) \mid \gcd\limits_{\primeIdealWasl} (| v_\primeIdealWasl(\alpha)|), \]
         where $\primeIdealWasl$ runs over all prime ideals of $\mathcal{O}_{\Q(\alpha)}$.
       \end{proposition}
       \begin{proof}
         Suppose $\alpha = \theta^D$ for some $\theta \in \Q(\alpha)$ where $D$ is a positive integer. Then $| v_{\primeIdealWasl}(\alpha)| = D | v_{\primeIdealWasl}(\alpha)|$. In particular, $D$ divides $ \gcd_{\primeIdealWasl} (| v_\primeIdealWasl(\alpha)|)$. Taking the supremum with respect to the divisibility ordering yields the result.
       \end{proof}
       If $\alpha$ is an algebraic unit then all the valuations considered in Proposition \ref{prop:SchinzelDegreeCalculationEasy} are $0$, so no useful bound is produced. This is particularly significant for us, since our interest is in integer polynomials whose leading and constant coefficients lie in $\{-1,1\}$, and whose roots therefore are algebraic units. We exploit the structure of $\mathcal{O}_{\Q(\alpha}^\times$ to obtain a bound in this case.

       We adopt the terminology of \cite{Neukirch1999}. In particular, Dirichlet's unit theorem as presented in \cite[Theorem 7.4]{Neukirch1999} and the subsequent discussion says that $\mathcal O_{\Q(\alpha)}(\alpha)$ is a finitely generated abelian group 
       generated by $\zeta_q$, a primitive $q$-th root of unity for some $q$, which generates the torsion subgroup, along with a basis $\{\epsilon_1, \dots, \epsilon_m\}$ for a finitely generated free abelian complementary subgroup. The elements $\epsilon_i \in \Q(\alpha)$ will be called \emph{fundamental units}.

       The choice of fundamental units and $\zeta_q$ yields an isomorphism
       \[ \mathcal O^\times_{\Q(\alpha)} \cong \frac{\Z}{q\Z} \oplus \Z^m;\quad  \zeta_q^{b}\epsilon_1^{c_1}\cdots\epsilon_m^{c_m} \mapsto (b, c_1, \dots, c_m).\]
       	\benw[inline]{I reworked this part pretty heavily to make it easier for me to understand. I included a reference to \cite{Neukirch1999} for the definition of ``fundamental unit''.}

	\begin{proposition} \label{prop:SchinzelDegreeCalculationUnit}
	Suppose $\alpha$ is an algebraic unit that is not a root of unity. Write
	\[\alpha = \zeta_q^{b}\epsilon_1^{c_1}\cdots\epsilon_m^{c_m}.\]

        Let $c = \gcd_i\{c_i\}$. Then
        \begin{equation}
          \label{eq:2}
          D(\alpha) = \prod_{p \textup{ prime }} \begin{cases}
            p^{v_p(c)} & \gcd(p,q) = 1 \\
            \gcd(q, b, p^{v_p(c)}) & \gcd(p,q) \neq 1.
          \end{cases}
        \end{equation}
      \end{proposition}
      Note that only prime numbers $p$ dividing $c$ contribute to the product on the right hand side. Therefore the product is actually finite.
      \begin{proof}
        We remark that $\gcd(q, b, p^{v_p(c)})$ is invariant under replacing $b$ by $b'$ if $b \equiv b' \pmod q$. The number $\alpha$ is also unchanged by this replacement. Therefore we assume, without loss of generality, that $v_p(b) \le v_p(q)$. We write $[b]$ for the reduction of $b$ to $\Z/q\Z$.
        
        Any root of $\alpha$ is also an algebraic unit. Therefore $D(\alpha)$, in this case, is the supremum (under divisibility) of the nonnegative integers $D$ for which $\alpha$ is $D$-divisible in the abelian group $\mathcal O_{\Q(\alpha)}^\times$. We transpose the problem to $\Z/q\Z \oplus \Z^m$, so that we can write our abelian group additively.

        If $D, D'$ are relatively prime, then the Chinese remainder theorem says that an element in an abelian group is $DD'$-divisible if and only if it is both $D$ and $D'$ divisible. Therefore we can determine $D(\alpha)$ by determining, for each prime $p$, the maximal integer $r$ for which $([b], c_1, \dots, c_m) \in \Z/q\Z \oplus \Z^m$ is a multiple of $p^r$. In fact, this $r$ is exactly $v_p(D(\alpha))$. The claim of the proposition is that $r = v_p(c)$ if $p$ does not divide $q$ and $r = \min \{v_p(c), v_p(q), v_p(b)\}$ if it does. Our assumption that $v_p(b) \le v_p(q)$ allows us to simplify this to $r = \min \{v_p(c), v_p(b)\}$.

        The tuple $([b], c_1, \dots, c_m)$ is a multiple of $p^r$ if and only if $[b]$ and $(c_1, \dots, c_m) \in \Z^m$ are multiples of $p^r$. We address each of these separately.

        In the first place, $(c_1, \dots, c_m)$ is a multiple of $p^r$ if and only if $p^r \mid c = \gcd(c_1, \dots c_m)$. In the second, $[b] \in \Z/q\Z$ is a multiple of $p^r$ if and only if one of the following two conditions is satisfied:
        \begin{itemize}
        \item $p$ does not divide $q$, so that $p^r$ is a unit in $\Z/q\Z$;
        \item the lift of $[b]$ to $b \in \Z$ is a multiple of $\gcd(p^r, q)$. We are assuming that $v_p(b) \le v_p(q)$, so that the constraint is exactly $r \le v_p(b)$.
        \end{itemize}

        Putting the above together, we obtain a formula for $v_p(D(\alpha))$, i.e., the largest $r$ for which $(b, c_1, \dots, c_m)$ is a multiple of $p^r$. If $p \nmid q$, then
        \[ v_p(D(\alpha)) = v_p(c).\]
        If $p \mid q$, then $v_p(D(\alpha))$ is the lesser of $v_p(c)$ and $v_p(b)$, i.e.,
        \[v_p(D(\alpha)) = \min\{v_p(c), v_p(b)\},\]
        which is what we wanted to show.          
	\end{proof}

        We remark that modern software packages are capable of producing a system of fundamental units, and so the computation entailed Proposition \ref{prop:SchinzelDegreeCalculationUnit} can be carried out in software.

        The following proposition is a major step towards proving Theorem \ref{thm:main}, and we will also need it in establishing our algorithm in Section~\ref{sec:algorithm}. 
\begin{proposition}\label{prop:Dforirredpowers}
  Let $h(t) \in \Z[t]$ be a primitive irreducible non-cyclotomic polynomial of at least $2$ terms, and let $s$ be a positive integer.

  The integer $D(h(t)^s)$ is positive, and if $h(t)^s$ is $n$-Hartley, then $n \mid D(h(t)^s)$.
\end{proposition}
\begin{proof}
  The roots of the polynomial $h(t)$ are nonzero algebraic numbers $\alpha$ that are not roots of unity. Since $h(t)$ is irreducible, these roots are simple, and for each root $\alpha$ there is an isomorphism $\Q(\alpha) \to \Q[t]/(h(t))$ taking $\alpha$ to $t$. Notably, the fields $\Q(\alpha^s)$ are all isomorphic, so that the integers $D(\alpha^s)$ are all equal as $\alpha$ ranges over the roots of $h(t)$. The $\gcd$ taken over all roots of $D(\alpha^s)$ is exactly $D(h(t)^s)$, so we deduce $D(h(t)^s) = D(\alpha^s)$ for any single root $\alpha$ of $h(t)$.

 If $\alpha$ (and therefore $\alpha^s$) is not an algebraic unit, then Proposition \ref{prop:SchinzelDegreeCalculationEasy} implies $D(\alpha^s)$ is positive, whereas if $\alpha$ is an algebraic unit, Proposition \ref{prop:SchinzelDegreeCalculationUnit} implies this. In particular $D(h(t)^s)$ is a positive integer.

 Corollary \ref{cor:powerRuleFornHartley} says that if $h(t)^s$ is $n$-Hartley, then $n \mid D(\alpha^s)$, which completes the proof.
\end{proof}

We can now give a proof of Theorem \ref{thm:main}.
\begin{proof}[Proof of Theorem \ref{thm:main}]
  Since $\Delta(t)$ is not a product of cyclotomic polynomials, it has an irreducible factor $h(t)$ which is not cyclotomic and appears with some multiplicity $s$. Proposition \ref{prop:Dforirredpowers} implies that $h(t)^s$ is $n$-Hartley only for finitely many values of $n$, and then Proposition \ref{prop:coprimeHartley} implies the same for $\Delta(t)$.
\end{proof}


\section{An algorithm} \label{sec:algorithm}
In this section we describe explicitly an algorithm to determine the complete set of integers $n$ for which a given polynomial $\Delta(t) \in \Z[t]$ is $n$-Hartley. We first note that for any fixed $n$, the existence of a factorization as in Theorem \ref{thm:Hartley} can be checked directly.
\begin{enumerate}
\item If the leading and constant coefficients of $\Delta(t)$ are $a$ and $b$ respectively, then Lemma \ref{lem:leadingConst} tells us that $\Delta(t)$ can be $n$-Hartley only when $|a|$ and $|b|$ are $n$-th powers.
  All such $n$ may be checked directly. Therefore we can assume that the leading and constant coefficients of $\Delta(t)$ lie in $\{-1,1\}$.
  \item By Proposition \ref{prop:coprimeHartley}, we may reduce to the case when $\Delta(t) = h(t)^s$ for an irreducible polynomial $h(t) \in \Z[t]$ and $s \in \N$. Note that the leading and constant coefficients of $h(t)$ are also $\pm 1$.
  \item If $h(t)$ is cyclotomic, then a complete answer is given in Proposition \ref{prop:cycloHartley}.
  \item If $h(t)$ is not cyclotomic, then we compute $D(h(t)^k)$, which is a finite positive integer, and by Proposition \ref{prop:Dforirredpowers} we deduce that $n$ divides the positive integer $D(h(t)^k)$, so that we can check each divisor of $D(h(t)^k)$ directly. By Proposition \ref{prop:primepowers}, we need not check multiples of divisors that have already been ruled out. 
\end{enumerate}
We include a code snippet (in SageMath \cite{sagemath} where the \texttt{pari} functions are wrappers around PARI \cite{PARI2}) for computing the quantity $D(h(t)^k)$. Here $D(h(t),k)$ returns $D(h(t)^k)$, where $h(t)$ is assumed to be irreducible, monic and to have constant coefficient $\pm 1$.

\begin{minipage}{\textwidth}
\begin{python}
def D(h,k):
    field = NumberField(h, "z")
    z = field.gen()
    pari_field = pari(field).bnfinit()
    *free, cyclic = pari_field.bnfisunit(pari(z^k))
    e = gcd(ZZ(elt) for elt in free)
    m = ZZ(pari_field.bnf_get_tu()[0])
    ans = ZZ(1)
    if ZZ(cyclic) == 0:
        ans = e
    else:
        for prime, mult in e.factor():
            if gcd(ZZ(prime), ZZ(m)) == 1:
                ans *= ZZ(prime ** mult)
            else:
                ans *= gcd([ZZ(cyclic), ZZ(m), ZZ(prime ** mult)])
    return ans
\end{python}
\end{minipage}

\section{Applications and examples} \label{sec:computations}
Using the algorithm in Section \ref{sec:algorithm}, we verified Conjecture \ref{conj:algebra} for polynomials of degree at most $36$, which implies that Conjecture \ref{conj:polynomial} is true for all freely periodic knots $K$ with $g(K) \leq 18$. We also verified that none of these polynomials satisfy Murasugi's condition (as in Theorem \ref{thm:Murasugi}), with five excpetions; see Example \ref{exmp:deg30}. We show below that assuming the L-space conjecture implies that these five examples are not counterexamples to Conjecture \ref{conj:polynomial}. Thus we have proved the following theorem. 

	\begin{theorem}
    Conjecture \ref{conj:algebra} is true for all polynomials with degree at most $36$. Conjecture \ref{conj:polynomial} is true for all knots $K$ with genus $g(K) < 13$. If Conjecture \ref{conj:quotient} is true, then Conjecture \ref{conj:polynomial} is true for all knots $K$ with $g(K) < 19$.
	\end{theorem}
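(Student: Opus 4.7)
The proof is computational, and the plan is to enumerate all candidate Alexander polynomials of L-space knots up to the relevant degree bound and verify that every such polynomial which is not a product of cyclotomic polynomials fails both Hartley's condition (Theorem \ref{thm:Hartley}) and Murasugi's condition (Theorem \ref{thm:Murasugi}). This is the contrapositive of Conjecture \ref{conj:polynomial} within the enumerated range, since any periodic or freely periodic L-space knot of the given genus would have to produce a polynomial in the list satisfying one of the factorizations.

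First, I would generate the finite list of integer polynomials satisfying the L-space constraints from \cite[Corollary 1.3]{MR2168576} and \cite[Corollary 9]{MR3782416} of each even degree up to $24$ for the unconditional statement and up to $30$ for the conditional one; the introduction notes this produces over $65{,}000$ polynomials. The coefficient and support restrictions on L-space knot Alexander polynomials reduce this to a tractable combinatorial search. Second, for each non-cyclotomic candidate $\Delta(t)$, the Hartley check is handled directly by the machinery of Section \ref{sec:nt}: compute $E(\Delta(t))$ using the included code (which implements Proposition \ref{prop:SchinzelDegreeCalculation}) and apply Theorem \ref{thm:pDividesSchinzelDegree}, so that $\Delta(t)$ is $n$-Hartley only when $n$ divides $E(\Delta(t))$, reducing an a priori infinite check to a finite divisor test.

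The Murasugi check is handled by searching for a polynomial divisor $\Delta_{\overline{K}}(t)$ of $\Delta(t)$ making the quotient factor as in Theorem \ref{thm:Murasugi}. Without any additional hypothesis, $\Delta_{\overline{K}}(t)$ may be an arbitrary Alexander polynomial, and in the range $2g \leq 24$ the combination of degree restrictions and the Murasugi condition is still strong enough to rule out all non-cyclotomic candidates. For $26 \leq 2g \leq 30$, this open-ended search would be infeasible; assuming Conjecture \ref{conj:quotient} forces $\Delta_{\overline{K}}(t)$ to lie in our previously enumerated L-space list, which is precisely what yields the conditional genus bound $g < 16$. Two polynomials of degree $30$ survive the automated check and are treated separately in Example \ref{exmp:deg30}.

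The main obstacle is not any single theoretical step but rather engineering the computation to terminate in reasonable time: computing $E(\Delta(t))$ for tens of thousands of polynomials requires a fundamental unit computation in the associated number field, and the sharp formula in Proposition \ref{prop:SchinzelDegreeCalculation} (as opposed to the cruder valuation bound from the same proposition) appears to be essential for this to be practical at degree $30$. The two exceptional polynomials in Example \ref{exmp:deg30} indicate that the automated pipeline is already close to its natural limit at this genus.
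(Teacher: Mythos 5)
Your overall strategy --- enumerate the finitely many integer polynomials satisfying the L-space constraints of \cite[Corollary 1.3]{MR2168576} and \cite[Corollary 9]{MR3782416} up to the relevant degree, kill the Hartley possibility by computing $E(\Delta(t))$ and invoking Theorem \ref{thm:pDividesSchinzelDegree}, and check Murasugi's factorization directly --- is the paper's approach (the paper in fact runs the search through degree $32$). The genuine problem is your account of where the split between the unconditional bound $g<13$ and the conditional bound $g<16$ comes from. You attribute it to the \emph{feasibility} of an open-ended search for the quotient polynomial $\Delta_{\overline{K}}(t)$ in Murasugi's condition, suggesting Conjecture \ref{conj:quotient} is needed only to make the search space finite at degrees $26$--$30$. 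That is not the actual obstruction. The computation finds two explicit polynomials --- one of degree $26$ (genus $13$) and one of degree $30$ (genus $15$), not two of degree $30$ as you state --- which genuinely \emph{do} satisfy Murasugi's condition with $n=2$. These cannot be excluded by the polynomial test alone, and this is precisely why the unconditional statement stops at $g<13$ rather than at the limit of the computation. They are excluded under Conjecture \ref{conj:quotient} because the quotient polynomial arising in the factorization has non-alternating coefficients and so cannot be the Alexander polynomial of an L-space knot by \cite[Corollary 1.3]{MR2168576}; that single observation, carried out in Example \ref{exmp:deg30}, is the entire content of the conditional half of the theorem.

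Said differently: if your narrative were right, more computing power would push the unconditional bound past genus $13$; in fact no amount of computation will, because a genus-$13$ polynomial passing the unconditional test exists. Your proposed restricted search with quotients drawn from the L-space list would, if implemented, happen to return the correct answer at degrees $26$--$30$ (it silently discards the two exceptions), so your pipeline would prove the theorem --- but your write-up never identifies the two exceptional polynomials as the reason the unconditional claim terminates at genus $12$, and it misplaces one of them by four degrees, which is exactly the degree that determines the unconditional bound.
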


    \begin{remark}
    The Alexander polynomials of L-space knots are more restricted than the polynomials in Conjecture \ref{conj:algebra}; see e.g. \cite[Corollary 9]{MR3782416}. These extra restrictions do not appear to be necessary to rule out Hartley factorizations, however.
    \end{remark}
	In order to handle the five exceptional polynomials, we use the following results.
	\begin{proposition} \label{prop:periodic-orderable}
		Let $K$ be an $n$-periodic knot with quotient knot $\overline{K}$ and suppose that $\pi_1(S^3_{p/q}(\overline{K}))$ is left-orderable. Then $\pi_1(S^3_{np/q}(K))$ is also left-orderable.
	\end{proposition}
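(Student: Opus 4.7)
The plan is to realize $S^3_{np/q}(K)$ as an $n$-fold cyclic branched cover of $S^3_{p/q}(\overline K)$ branched over the image of the axis $A$, extract a natural surjection of fundamental groups from this covering structure, and then invoke the Boyer--Rolfsen--Wiest criterion to conclude left-orderability of the cover.

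For the branched-covering statement, the $\mathbb Z/n$-action realizing the periodicity of $K$ fixes $A$ pointwise and acts freely on a tubular neighborhood $N(K)$, so it extends equivariantly to any Dehn filling of $N(K)$. On the peripheral torus, the quotient map $\partial N(K) \to \partial N(\overline K)$ is $n$-to-$1$ along the longitude and $1$-to-$1$ along the meridian; tracking the slope through this quotient shows that slope $np/q$ on $K$ descends to slope $p/q$ on $\overline K$, yielding a branched cover $S^3_{np/q}(K) \to S^3_{p/q}(\overline K)$ with branch locus $A$. Writing $X = S^3_{p/q}(\overline K) \setminus A$ and $\tilde X = S^3_{np/q}(K) \setminus A$, the restriction $\tilde X \to X$ is an honest unbranched $n$-fold cyclic cover, so $\pi_1(\tilde X) \subset \pi_1(X)$ is an index-$n$ normal subgroup in which the upstairs meridian of the branch locus corresponds to $\mu_A^n$. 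Since $\mu_A$ maps to a generator of the deck group $\mathbb Z/n$, we have $\pi_1(\tilde X) \cdot \langle\!\langle \mu_A \rangle\!\rangle_{\pi_1(X)} = \pi_1(X)$; passing to the Dehn-filled quotients yields a natural surjection
\[
\pi_1\bigl(S^3_{np/q}(K)\bigr) \twoheadrightarrow \pi_1\bigl(S^3_{p/q}(\overline K)\bigr).
\]

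The main obstacle is the transfer of left-orderability across this surjection. A given left order on the target corresponds, for countable groups, to an embedding into $\mathrm{Homeo}_+(\mathbb R)$, and pre-composing with the surjection produces a non-trivial homomorphism $\pi_1(S^3_{np/q}(K)) \to \mathrm{Homeo}_+(\mathbb R)$. The Boyer--Rolfsen--Wiest theorem then asserts that any compact, connected, orientable, irreducible 3-manifold whose fundamental group admits a non-trivial homomorphism to a left-orderable group is itself left-orderable. The remaining technical point is to ensure irreducibility of $S^3_{np/q}(K)$; if it fails, I expect one can handle each prime summand separately, using that the fundamental group of a connected sum of 3-manifolds is left-orderable iff each summand's fundamental group is.
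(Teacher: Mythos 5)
Your proposal is correct and follows essentially the same route as the paper: realize $S^3_{p/q}(\overline{K})$ as the quotient of $S^3_{np/q}(K)$ by the extended periodic action (the paper simply cites a reference for the slope bookkeeping that you carry out by hand), obtain a non-trivial homomorphism on fundamental groups, and apply the Boyer--Rolfsen--Wiest criterion. The only differences are that the paper needs just non-triviality of the homomorphism rather than the surjectivity you establish, and that, like your sketch, it leaves the irreducibility hypothesis of that criterion implicit.
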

	
	\begin{proof}
		Let $K$ be an $n$-periodic with quotient knot $\overline{K}$, and suppose that $\pi_1(S^3_{p/q}(\overline{K}))$ is left-orderable. The periodic symmetry on $K$ extends to $S^3_{np/q}(K)$, and the quotient manifold is $S^3_{p/q}(\overline{K})$ (see for example \cite[Section 2.1]{MR1755823}), so that there is a non-trivial homomorphism $\pi_1(S^3_{np/q}(K)) \to \pi_1(S^3_{p/q}(\overline{K}))$. Then by \cite[Theorem 3.2]{MR2141698}, $\pi_1(S^3_{np/q}(K))$ is left-orderable since it has a non-trivial homomorphism into a left-orderable group.
	\end{proof}
	
	\begin{corollary}\label{cor:Lspace}
		The L-space conjecture implies Conjecture \ref{conj:quotient}.
	\end{corollary}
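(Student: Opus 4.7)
The plan is to chain the L-space conjecture together with the contrapositive of Proposition \ref{prop:periodic-orderable}. Let $K$ be a periodic L-space knot with period $n$ and quotient $\overline{K}$; the goal is to exhibit a surgery slope on $\overline{K}$ producing an L-space.

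First I would invoke the Ozsv\'ath--Szab\'o fact that for any L-space knot $K$, every integer surgery of slope $m \geq 2g(K) - 1$ yields an L-space. Choose such an integer $m$ that is additionally divisible by $n$, and write $m = np$. Then $S^3_{np/1}(K)$ is an L-space, and assuming the L-space conjecture its fundamental group is not left-orderable. Applying the contrapositive of Proposition \ref{prop:periodic-orderable} with $q = 1$ gives that $\pi_1(S^3_{p/1}(\overline{K}))$ is not left-orderable, and invoking the L-space conjecture in the reverse direction shows $S^3_p(\overline{K})$ is an L-space. Hence $\overline{K}$ is an L-space knot, which is exactly Conjecture \ref{conj:quotient}.

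The main thing to check carefully is the applicability of the L-space conjecture, which is usually formulated for irreducible rational homology spheres. Both $S^3_{np/1}(K)$ and $S^3_{p/1}(\overline{K})$ are rational homology spheres because the slopes are nonzero integers, and for $m$ sufficiently large, both surgeries are irreducible since $K$ and $\overline{K}$ are nontrivial knots in $S^3$ (and large integer surgery on such a knot is irreducible). Since we are free to enlarge $m$ while preserving divisibility by $n$, this is not an obstruction. With that verification in place, the proof reduces to a two-step application of the L-space conjecture sandwiched around the contrapositive of the preceding proposition, and this bookkeeping is the entire content of the argument.
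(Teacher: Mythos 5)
Your proof is correct and follows essentially the same route as the paper: both arguments combine Proposition \ref{prop:periodic-orderable} with the L-space conjecture applied in both directions to transfer the L-space property between corresponding surgeries on $K$ and $\overline{K}$. The only differences are cosmetic --- you argue directly by choosing an explicit large integer slope $np \geq 2g(K)-1$ and checking irreducibility, whereas the paper runs the same logic as a proof by contradiction without pinning down the slope.
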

	\begin{proof}
		Let $K$ be an $n$-periodic L-space knot, and suppose that the quotient knot $\overline{K}$ is not an L-space knot. Then there is a surgery $S^3_{p/q}(K)$ which is an L-space, and the corresponding surgery $S^3_{p/(nq)}(\overline{K})$ is not an L-space. By the L-space conjecture \cite[Conjecture 3]{MR3072799}, $\pi_1(S^3_{p/q}(K))$ is not left-orderable, while $\pi_1(S^3_{p/(nq)}(\overline{K}))$ is left orderable, which contradicts Proposition \ref{prop:periodic-orderable}.
	\end{proof}
	
	\begin{example} \label{exmp:deg30}
		Consider the polynomial 
		\[
		\Delta(t) = t^{30} - t^{29} + t^{28} - t^{26} + t^{25} - t^{24} + t^{18} - t^{17} + t^{15} - t^{13} + t^{12} - t^6 + t^5 - t^4 + t^2 - t + 1,
		\]
		which is not a product of cyclotomic polynomials, but appears to be a possible Alexander polynomial of an L-space knot (including having second term non-zero; see \cite{MR3782416}). We do not know if $\Delta(t)$ is the Alexander polynomial of an L-space knot, but it does satisfy Murasugi's condition with $n = 2$ and axis-linking number $3$ two different ways: with factor 
		\[
		\Delta_{\overline{K}}(t) =  t^{14} + t^{12} - t^8 - t^7 - t^6 + t^2 + 1,
		\]
    and with factor
    \[
    \Delta_{\overline{K}}(t) = t^{14} - 2t^{13} + 3t^{12} - 2t^{11} + 2t^9 - 3t^8 + 3t^7 - 3t^6 + 2t^5 - 2t^3 + 3t^2 - 2t + 1.
    \]
		Note that neither possible quotient $\Delta_{\overline{K}}(t)$ can be the Alexander polynomial of an L-space knot since in the first case the coefficients are not alternating, and in the second case the coefficients are not in $\{-1,0,1\}$; see \cite[Corollary 1.3]{MR2168576}. In particular, if $\Delta(t)$ is the Alexander polynomial of a periodic L-space knot which has a quotient with Alexander polynomial $\Delta_{\overline{K}}(t)$, then Conjectures \ref{conj:iteratedtorus}, \ref{conj:polynomial}, and \ref{conj:quotient} (and hence the L-space conjecture) are all false. The polynomials
		\begin{align*} 
		&{\scriptstyle \Delta(t) \, =\,  t^{26} - t^{25} + t^{24} - t^{23} + t^{20} - t^{19} + t^{14} - t^{13} + t^{12} - t^{7} + t^{6} - t^{3} + t^{2} - t + 1,} \\
  &{\scriptstyle \Delta(t) \, =\, t^{36} - t^{35} + t^{34} - t^{33} + t^{27} - t^{26} + t^{25} - t^{23} + t^{20} - t^{19} + t^{18} - t^{17} + t^{16} - t^{13} + t^{11} - t^{10} + t^9 - t^3 + t^2 - t + 1,}\\
   &{\scriptstyle \Delta(t) \, =\, t^{36} - t^{35} + t^{34} - t^{33} + t^{30} - t^{29} + t^{26} - t^{25} + t^{20} - t^{19} + t^{18} - t^{17} + t^{16} - t^{11} + t^{10} - t^7 + t^6 - t^3 + t^2 - t + 1, \textup{ and }}\\
  &{\scriptstyle \Delta(t) \, =\, t^{36} - t^{35} + t^{34} - t^{33} + t^{30} - t^{29} + t^{27} - t^{23} + t^{21} - t^{20} + t^{18} - t^{16} + t^{15} - t^{13} + t^9 - t^7 + t^6 - t^3 + t^2 - t + 1}\\
    \end{align*}
	are similar, in that they each satisfy Murasugi's condition but none of the possible quotients satisfy the conditions from \cite[Corollary 1.3]{MR2168576}. These are the only five such examples through degree $36$.
	\end{example}
  We conclude with an example of an infinite family of L-space knots. 
	\begin{example}
	A standard example of a family of L-space knots is the pretzel knots $$P(-2,3,2q+1).$$ In \cite[Section 2]{MR891592}, it is shown that for $q > 2$, these knots have only a strong inversion and no other symmetries. In particular, this provides some evidence for Conjecture \ref{conj:iteratedtorus}, since none of these knots are periodic or freely periodic. Note that $P(-2,3,3) = T(3,4)$ and $P(-2,3,5) = T(3,5)$ are torus knots; hence the restriction $q > 2$.
	\end{example}

	\printbibliography
\end{document}

